\newtheorem{thm}{Theorem}[section]
\newtheorem{lem}[thm]{Lemma}
\theoremstyle{definition}
\newtheorem{defn}[thm]{Definition}
\newtheorem{rem}[thm]{Remark}
\numberwithin{equation}{section}
\begin{document}

\begin{center}
\textbf{{\large {\ 
Source Identification Problem for a Nonlinear Subdiffusion Equation}}}\\[0pt]
\medskip \textbf{R.R. Ashurov$^{1,2}$ and O.T. Mukhiddinova$^{3,1}$}\\[0pt]
\textit{ashurovr@gmail.com, oqila1992@mail.ru \\[0pt]}

\smallskip

\textit{$^{1}$ V.I. Romanovskiy Institute of Mathematics, Uzbekistan Academy of Science, University str., 9, Olmazor district, Tashkent, 100174, Uzbekistan} \\

\textit{$^{2}$ Central Asian University, 264 Milliy Bog Street, Barkamol MFY, Mirzo Ulugbek District, Tashkent 111221, Uzbekistan} \\

\textit{$^{3}$ Tashkent University of Information Technologies named after Muhammad al-Khwarizmi, Str., 108, Amir Temur Avenue, Tashkent, 100200, Uzbekistan}

\end{center}

\textbf{Abstract.}
The work is devoted to the study of the inverse problem of determining the right-hand side of a nonlinear subdiffusion equation with a Caputo derivative with respect to time. Nonlinearity of the equation means that the right-hand side of the equation depends nonlinearly on the solution of the equation. The inverse problem consists of reconstructing the coefficient of the right-hand side, which depends on both time and spatial variables, under a measurement in an integral form. Similar inverse problems were previously studied in the case when the right-hand side depends only on time or on a spatial variable. A weak solution is sought by the Galerkin method. A priori estimates are proved, and with their help, the existence and uniqueness of a solution to the inverse problem under consideration are established. It is noteworthy that the results obtained are new for diffusion equations as well.

\textbf{Keywords:}
Nonlinear subdiffusion equation, Caputo time derivative, inverse problem, a priori estimates, uniqueness and existence of solution, the Galerkin method.

\section{Introduction}

Let $Q_T = (0, T) \times G \times (0, \pi)$ and $D = G \times (0, \pi)$, where $G \subset \mathbb{R}^n$ is a bounded domain with a sufficiently smooth boundary. We consider the following initial-boundary value problem:
\begin{equation} \label{1}
\begin{cases}
D_t^{\alpha} u - \Delta_x u - u_{yy} = g(t, x, y, u) + f(t, x, y) \cdot h(t, x), & (t, x, y) \in Q_T, \\
u(0, x, y) = \varphi(x, y), & (x, y) \in D, \\
u(t, x, y) = 0, & x \in \partial G, t \in [0, T], y \in [0, \pi], \\
u(t, x, 0) = u(t, x, \pi) = 0, & t \in [0, T], x \in G.
\end{cases}
\end{equation}

Here, $f$, $g$, and $\varphi$ are given continuous functions, and $\Delta_x$ denotes the Laplacian with respect to the spatial variables $x$, $\alpha \in (0, 1]$. The operator $D_t^\alpha$ denotes the Caputo fractional derivative, which for an absolutely continuous function $v(t)$ is defined by (see, e.g., \cite{Kil}, p. 91)
$$
D_t^\alpha v(t) := J_t^{1 - \alpha} D_t v(t), \quad J_t^\alpha v(t) = \frac{1}{\Gamma(\alpha)} \int_0^t (t - \tau)^{\alpha - 1} v(\tau)\, d\tau,
$$
where $D_t = d/dt$, $\Gamma(\alpha)$ is the Gamma function, and $J_t^\alpha$ is the Riemann–Liouville fractional integral operator. In what follows we will denote the operators $\Delta_x$ and $\nabla_x$ as $\Delta$ and $\nabla$; it is clear that this will not cause confusion.

If the function $h(t, x)$ is known, then under appropriate assumptions on the given data, the initial-boundary value problem \eqref{1} admits a unique solution (see, e.g., \cite{Gal}).

Now, suppose that $h(t, x)$ is unknown and must be recovered. The main objective of this paper is to investigate the inverse problem of simultaneously determining the pair $\{ u(t, x, y), h(t, x) \}$, subject to the additional integral condition:
\begin{equation} \label{overdetermination}
\int_0^\pi u(t, x, y) \omega(y) dy = H(t, x), \quad t \in [0, T],\ x \in G,
\end{equation}
where $\omega$ and $H$ are known functions; their precise assumptions will be specified later.

Fractional differential equations provide a more adequate framework for modeling processes that exhibit memory effects, as they naturally incorporate nonlocal dependencies through fractional-order derivatives. Unlike classical differential equations, which describe dynamics based solely on instantaneous states, fractional models account for the cumulative influence of past states, making them particularly suitable for systems in physics, biology, and engineering where historical behavior significantly impacts future outcomes (see, e.g. \cite{Machado}). 

The two most commonly employed fractional derivatives are the Riemann-Liouville and Caputo derivatives (see, e.g. \cite{Kil}). The Caputo derivative, first introduced in \cite{Caputo} to analyze energy dissipation in anelastic materials, has become a valuable tool in engineering applications. Its use is supported, for instance, by the generalized Langevin equation \cite{13} and specific limiting processes tied to probability theory \cite{14, 15}. Unlike Riemann-Liouville derivatives, Caputo derivatives mitigate singularities at the origin and share properties with classical derivatives, making them particularly well-suited for initial value problems (see, e.g. \cite{Samko, PSK}).

In recent years, there has been a growing interest in inverse problems for both classical and fractional-order differential equations (see, e.g., the monograph by S. I. Kabanikhin \cite{Kaban} and the review by Yamamoto \cite{Yamamoto1}). This interest is motivated by the central role such problems play in applications ranging from mechanics and seismology to medical imaging and geophysics (see, e.g., \cite{Machado}, \cite{Kaban}, \cite{Yamamoto1}).

A considerable amount of research has focused on inverse problems for source identification when the source term has the separable form $F(x, t) = g(t) f(x)$, where either $f(x)$ or $g(t)$ is unknown. To the best of our knowledge, the more general case in which such a factorization is not assumed remains largely unexplored. In this situation, even the formulation of a suitable overdetermination condition becomes nontrivial.

Inverse problems for recovering a time-dependent factor $g(t)$ are typically approached by reducing the problem to a Volterra-type integral equation (see, e.g., \cite{ASh1}–\cite{ASh2} and references therein). On the other hand, the recovery of a spatially dependent factor $f(x)$ is commonly analyzed in two main settings: when $g(t) \equiv 1$, and when $g(t) \not\equiv 1$. The former has been studied in several works (e.g., \cite{KirM}–\cite{AMux1}), while the latter poses additional challenges, with solvability strongly influenced by the properties of $g(t)$.

To address such inverse problems, researchers often impose an overdetermination condition, either in the form of a final time measurement $u(x, T) = \psi(x)$, or as an integral condition derived from the solution $u$ (see, e.g., \cite{ASh3}–\cite{VanBockstal}, as well as \cite{Kaban} and the review \cite{Yamamoto1}).

The works most closely related to our study are those by S. Z. Dzhamalov and co-authors \cite{Djamalov1}–\cite{Djamalov3}, which address inverse problems involving the identification of the right-hand side of differential equations, where this term depends on both time and a subset of the spatial variables. These studies focus on establishing the existence of generalized solutions using the Galerkin method. 

A related inverse problem for a hyperbolic equation is studied in \cite{Fikret}, where the right-hand side also depends on time and a part of the spatial domain. The authors prove the uniqueness of the classical solution to the inverse problem.

Another notable contribution is found in \cite{XiaomaoDeng}, where a numerical method is proposed for solving a similar inverse problem for a parabolic equation.

Let us also mention two recent papers \cite{AshurovMuhiddinovanew} and \cite{AshurovMuhiddinovanew1}, where similar inverse problems are studied. In these papers, the authors studied the inverse problem of determining the right-hand side of the linear subdiffusion equation with the Caputo derivative with respect to time, where the right-hand side depends, as in the present paper, on both time and some spatial variables. The overdetermination condition in \cite{AshurovMuhiddinovanew} has the form
\begin{equation}\label{l_0}
u(t, x, l_0) = \psi(t, x), \quad t \in (0, T), \quad x \in (0, 1), \quad l_0 \in (0, \pi),
\end{equation}
where \( \psi \) is a known continuous function.
The existence and uniqueness of a weak solution of the inverse problem under consideration is proved. The authors of the work \cite{AshurovMuhiddinovanew1} investigated the inverse problem with the overdetermination condition (\ref{overdetermination}). In both works, the authors applied the Fourier method.

This paper consists of six sections. The following section contains supporting material, including well-known results of A. Alikhanov and analogy of the Aubin–Lions compactness lemma in the case of fractional
derivatives. Section 3 defines a weak solution to the inverse problem under consideration and presents the main result of the study. The solution to the problem is sought using the Galerkin method. In Section 4, a priori estimates of the solution of the inverse problem, its derivatives, and the unknown right-hand side are established. In Section 5, the convergence of the corresponding sequences is proved and, as a consequence, the main result is proved. The conclusion is given in Section 6.

\section{Preliminaries}
In this section, we remind the definition of the Mittag - Leffler functions and introduce some auxiliary lemmas that will be used throughout the paper. In particular, Alikhanov's estimates and an analogue of the Aubin–Lions lemma on compactness in the case of fractional derivatives are presented.

 The two-parameter Mittag-Leffler function $E_{\rho,\mu}(z)$ is an entire function defined by a power series of the form:
$$
E_{\rho,\mu}(z)= \sum\limits_{k=0}^\infty \frac{z^k}{\Gamma(\rho
k+\mu)},\quad \rho>0, \quad \mu, z\in \mathbb{C}.
$$
If \(\mu = 1\), then the Mittag-Leffler function is called the one-parameter or classical Mittag-Leffler function and is denoted by \(E_{\rho}(z) = E_{\rho,1}(z)\). Obviously, there is a constant $M (:=M(b))$ such that
\begin{equation}\label{E}
   E_{\rho,\mu}(z) \leq M,  \quad \mu>0, \quad z\in [0,b]\subset \mathbb{R}_+. 
\end{equation}

We also need the following connection between the fractional integral and derivative, which follows directly from the definitions and the equality for $v\in L_1(0,T)$: $J^\alpha_t(J^\beta_t v(t))= J^\beta_t(J^\alpha_t v(t))=J^{\alpha+\beta}_t v(t))$, $\alpha, \beta >0$ (see \cite{Dzh66}, p. 567).
\begin{lem}\label{J} Let $0<\alpha<1$ and $v(t)$ be absolutely continuous on $[0,T]$: $v\in AC\,[0,T]$. Then
\begin{equation}\label{JD}
    J_t^\alpha D^\alpha_t v(t)= v(t)- v(0).
\end{equation}
    
\end{lem}
Proof see in \cite{Dzh66}, p. 570.

Let us now recall the following two statements from the well-known work of Alikhanov \cite{Alixan}.
\begin{lem}\label{Alixan1}
Let $0<\alpha<1$ and  let  $w(t,x, y)$ be an absolutely continuous function on $[0,T]$ with values on $L_2(D)$. Then
\[
\frac{1}{2} D_t^\alpha \| w(t,x, y) \|_{L_2(D)}^2 \leq \int_D w(t,x, y) \, D_t^\alpha w(t,x, y) \, dx dy.
\]
\end{lem}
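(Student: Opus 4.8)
The plan is to reduce the $L_2(D)$-valued inequality to the classical scalar Alikhanov inequality and then prove the latter by an integration-by-parts argument. First I would write $\|w(t)\|_{L_2(D)}^2 = \int_D w^2(t,x,y)\,dx\,dy$ and interchange the operator $D_t^\alpha$ with the spatial integral, so that
$$\frac{1}{2} D_t^\alpha \|w(t,x,y)\|_{L_2(D)}^2 = \frac{1}{2}\int_D D_t^\alpha w^2(t,x,y)\,dx\,dy.$$
This interchange is legitimate because $w$ is absolutely continuous on $[0,T]$ with values in $L_2(D)$: the Bochner representation $w(t)=w(0)+\int_0^t w'(\tau)\,d\tau$ both furnishes a representative that is absolutely continuous in $t$ for a.e. $(x,y)$ and, together with Fubini's theorem, justifies moving $J_t^{1-\alpha}$ and $d/dt$ through $\int_D$. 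It then suffices to establish the pointwise scalar inequality $\frac{1}{2} D_t^\alpha v^2(t)\le v(t)\,D_t^\alpha v(t)$ for a.e. $(x,y)$ with $v(\cdot)=w(\cdot,x,y)$, and to integrate it over $D$.

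For the scalar inequality I would compute the difference of the two sides as a single integral. Using the definition of the Caputo derivative and $\tfrac{d}{d\tau}v^2(\tau)=2v(\tau)v'(\tau)$,
$$v(t)\,D_t^\alpha v(t)-\frac{1}{2} D_t^\alpha v^2(t)=\frac{1}{\Gamma(1-\alpha)}\int_0^t (t-\tau)^{-\alpha}\,[\,v(t)-v(\tau)\,]\,v'(\tau)\,d\tau.$$
Setting $\phi(\tau):=v(t)-v(\tau)$ with $t$ fixed, so that $\phi(t)=0$ and $[v(t)-v(\tau)]v'(\tau)=-\tfrac{1}{2}\tfrac{d}{d\tau}\phi^2(\tau)$, an integration by parts gives
$$v(t)\,D_t^\alpha v(t)-\frac{1}{2} D_t^\alpha v^2(t)=\frac{t^{-\alpha}}{2\,\Gamma(1-\alpha)}\,[v(t)-v(0)]^2+\frac{\alpha}{2\,\Gamma(1-\alpha)}\int_0^t (t-\tau)^{-\alpha-1}\phi^2(\tau)\,d\tau,$$
provided the boundary term at $\tau=t$ vanishes. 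Both terms on the right are manifestly nonnegative, which yields the claim.

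The main obstacle is exactly the vanishing of the boundary contribution $\lim_{\tau\to t^-}(t-\tau)^{-\alpha}[v(t)-v(\tau)]^2$. Writing $v(t)-v(\tau)=\int_\tau^t v'(r)\,dr$ and applying the Cauchy--Schwarz inequality gives $[v(t)-v(\tau)]^2\le (t-\tau)\int_\tau^t |v'(r)|^2\,dr$, hence $(t-\tau)^{-\alpha}[v(t)-v(\tau)]^2\le (t-\tau)^{1-\alpha}\int_\tau^t|v'(r)|^2\,dr\to 0$ as $\tau\to t^-$, since $1-\alpha>0$ and the integral tends to $0$; for a merely absolutely continuous $v$ one first proves the inequality for $C^1$ functions, where $v(t)-v(\tau)=O(t-\tau)$ makes the boundary term trivially negligible, and passes to the limit by density, or one simply invokes the cited estimate of Alikhanov \cite{Alixan}. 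The same computation carries over verbatim in the abstract Hilbert-space form, replacing $v(t)-v(\tau)$ by $w(t)-w(\tau)\in L_2(D)$ and $\phi^2$ by $\|w(t)-w(\tau)\|_{L_2(D)}^2$, giving a proof that avoids pointwise measurability considerations altogether.
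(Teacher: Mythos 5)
Your argument is correct in substance, but note that the paper never proves this lemma at all: it is simply quoted from Alikhanov's work \cite{Alixan}, so there is no internal proof to match, and your self-contained write-up goes beyond what the authors do. What you have produced is essentially a reconstruction of Alikhanov's original scalar argument — write the difference $v(t)\,D_t^\alpha v(t)-\tfrac12 D_t^\alpha v^2(t)$ as a single kernel integral, substitute $\phi(\tau)=v(t)-v(\tau)$, integrate by parts, and read off two manifestly nonnegative terms — supplemented by the reduction from the $L_2(D)$-valued statement to the scalar one (Bochner representation plus Fubini), or equivalently the verbatim Hilbert-space rerun with $\|w(t)-w(\tau)\|_{L_2(D)}^2$ in place of $\phi^2$. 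That reduction is exactly the step the paper passes over silently, since Alikhanov's lemma is stated for scalar functions; filling it in is genuinely useful, and your observation that the abstract Hilbert-space version avoids all pointwise measurability issues is the cleaner of your two routes. The one soft spot is the vanishing of the boundary term $\lim_{\tau\to t^-}(t-\tau)^{-\alpha}[v(t)-v(\tau)]^2$: your Cauchy--Schwarz bound requires $v'\in L_2$ near $t$, which absolute continuity alone (only $v'\in L_1$) does not provide, and the fallback you offer — prove the inequality for $C^1$ functions and ``pass to the limit by density'' — is asserted rather than executed, while the third option (citing \cite{Alixan}) would be circular if the goal is a proof. This caveat is harmless for the paper's purposes, because the lemma is only ever applied to the Galerkin approximations $u^n$, which solve a finite-dimensional system with Lipschitz right-hand side and have ample regularity; but a fully self-contained proof valid for every absolutely continuous $v$ would need the density step carried out, or the hypothesis strengthened to $v'\in L_2(0,T)$.
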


\begin{lem}\label{Alixan2L}
Let \( v(t)\in AC\, [0,T] \) be a positive function and for all \( t \in (0,T] \), the following inequality holds:
\[
D_t^\alpha v(t) \leq c_1 v(t) + c_2(t), \quad 0<\alpha\leq 1,
\]
for almost all $t\in [0,T]$, where \( c_1>0 \) and \( c_2(t) \) is an integrable nonnegative function on $[0,T]$. Then
\begin{equation}\label{Alixan2}
v(t)\leq v(0) \,E_\alpha(c_1 t^\alpha) + \Gamma(\alpha) \,E_{\alpha, \alpha}(c_1 t^\alpha) \, J_t^\alpha c_2(t).
    \end{equation}
\end{lem}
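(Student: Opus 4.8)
The plan is to convert the fractional differential inequality into an equivalent Volterra integral inequality and then resolve it by successive substitution, summing the resulting series into Mittag--Leffler functions. First I would apply the Riemann--Liouville integral $J_t^\alpha$ to both sides of $D_t^\alpha v(t) \le c_1 v(t) + c_2(t)$. Since the kernel $\frac{1}{\Gamma(\alpha)}(t-\tau)^{\alpha-1}$ is nonnegative, $J_t^\alpha$ is order-preserving, and by Lemma \ref{J} one has $J_t^\alpha D_t^\alpha v(t) = v(t) - v(0)$. This yields
$$v(t) \le v(0) + c_1 J_t^\alpha v(t) + J_t^\alpha c_2(t) = \phi(t) + c_1 J_t^\alpha v(t),$$
where $\phi(t) := v(0) + J_t^\alpha c_2(t)$.

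Next I would iterate this inequality. Substituting the bound into itself $n$ times and using the semigroup property $J_t^{k\alpha} J_t^\alpha = J_t^{(k+1)\alpha}$ recalled before Lemma \ref{J}, I obtain
$$v(t) \le \sum_{k=0}^{n-1} c_1^k J_t^{k\alpha}\phi(t) + c_1^n J_t^{n\alpha} v(t).$$
Since $v \in AC[0,T]$ is bounded, say by $V$, the remainder satisfies $c_1^n J_t^{n\alpha} v(t) \le V\,(c_1 T^\alpha)^n / \Gamma(n\alpha+1)$, which tends to $0$ as $n \to \infty$ because the series $\sum_{k} (c_1 T^\alpha)^k/\Gamma(k\alpha+1) = E_\alpha(c_1 T^\alpha)$ converges. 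Letting $n \to \infty$ therefore gives $v(t) \le \sum_{k=0}^\infty c_1^k J_t^{k\alpha}\phi(t)$.

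It then remains to evaluate the two pieces of $\phi$. For the constant part, using $J_t^{k\alpha}\,1 = t^{k\alpha}/\Gamma(k\alpha+1)$ I get $\sum_{k=0}^\infty c_1^k J_t^{k\alpha} v(0) = v(0)\sum_{k=0}^\infty (c_1 t^\alpha)^k/\Gamma(k\alpha+1) = v(0)\,E_\alpha(c_1 t^\alpha)$, the first term in the claim. For the forcing part, interchanging summation and integration (justified by nonnegativity via Tonelli's theorem) gives
$$\sum_{k=0}^\infty c_1^k J_t^{(k+1)\alpha} c_2(t) = \int_0^t (t-\tau)^{\alpha-1} E_{\alpha,\alpha}\!\big(c_1 (t-\tau)^\alpha\big)\, c_2(\tau)\, d\tau,$$
since the power series of $(t-\tau)^{\alpha-1} E_{\alpha,\alpha}(c_1(t-\tau)^\alpha)$ reproduces exactly the kernel $\sum_{k\ge 0} c_1^k (t-\tau)^{(k+1)\alpha-1}/\Gamma((k+1)\alpha)$.

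Finally, to reach the stated (slightly weaker but cleaner) bound I would use that $E_{\alpha,\alpha}$ has nonnegative Taylor coefficients and is therefore nondecreasing on $\mathbb{R}_+$, so $E_{\alpha,\alpha}(c_1(t-\tau)^\alpha) \le E_{\alpha,\alpha}(c_1 t^\alpha)$ for $\tau \in [0,t]$; combined with $c_2 \ge 0$ this pulls the Mittag--Leffler factor out of the integral and leaves $E_{\alpha,\alpha}(c_1 t^\alpha)\int_0^t (t-\tau)^{\alpha-1} c_2(\tau)\,d\tau = \Gamma(\alpha)\,E_{\alpha,\alpha}(c_1 t^\alpha)\, J_t^\alpha c_2(t)$, which is \eqref{Alixan2}. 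The main obstacle I anticipate is the rigorous justification of the two limiting operations --- the vanishing of the remainder $c_1^n J_t^{n\alpha} v(t)$ and the term-by-term summation of the convolution series --- both of which rest on the positivity of the fractional-integral kernel and on the growth of the Gamma function in the Mittag--Leffler denominators.
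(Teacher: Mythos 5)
Your proof is correct. A point of comparison first: the paper does not prove Lemma \ref{Alixan2L} at all --- it is quoted as a known result from Alikhanov's work \cite{Alixan}, so your argument must be measured against that source rather than against anything in the text. Alikhanov's route is genuinely different from yours: he notes that the differential inequality means $D_t^\alpha v(t) = c_1 v(t) + c_2(t) - c_3(t)$ for some nonnegative slack function $c_3$, invokes the representation (variation-of-constants) formula for the linear fractional Cauchy problem,
$v(t) = v(0)\,E_\alpha(c_1 t^\alpha) + \int_0^t (t-s)^{\alpha-1}E_{\alpha,\alpha}\bigl(c_1(t-s)^\alpha\bigr)\bigl(c_2(s)-c_3(s)\bigr)\,ds$,
drops the term containing $c_3$ thanks to the positivity of the Mittag--Leffler kernel, and then, exactly as in your final step, uses the monotonicity bound $E_{\alpha,\alpha}\bigl(c_1(t-s)^\alpha\bigr)\le E_{\alpha,\alpha}(c_1 t^\alpha)$ to pull the factor out of the convolution. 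Your approach replaces the appeal to the solution formula by a Picard/Neumann-series iteration of the Volterra inequality $v \le \phi + c_1 J_t^\alpha v$; this is more elementary and self-contained, since it uses only the positivity of the kernel, the semigroup property of $J_t^\alpha$, Lemma \ref{J}, and the convergence of the Mittag--Leffler series, whereas the cited argument presupposes well-posedness and representation theory for fractional ODEs with merely integrable right-hand side. What Alikhanov's route buys in exchange is brevity and a sharper intermediate result (the bound with $E_{\alpha,\alpha}$ inside the convolution), which your iteration also produces before the last weakening step. One cosmetic caveat: Lemma \ref{J} is stated in the paper only for $0<\alpha<1$, so for the endpoint case $\alpha=1$ you should remark that $J_t^1 D_t v = v - v(0)$ is simply the fundamental theorem of calculus for $v\in AC[0,T]$; with that remark, your argument covers the full range $0<\alpha\le 1$ claimed in the lemma.
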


Let \( B \) be a Banach space. We denote by \( L_\infty(0, T; B) \) the space of functions that are essentially bounded on \( (0, T) \) and take values in \( B \). The space \( L_1(0, T; B) \) is defined similarly. Let \( W_2^k(\Omega) \) denote a classical Sobolev space, where $\Omega\subset \mathbb{R}^N$, $N\geq 1$, is a bounded domain with the smooth boundary. Then, the symbol \( \dot{W}_2^1(\Omega) \) represents the closure of the set \( C_0^\infty(\Omega) \) with respect to the norm of \( W_2^1(\Omega) \). Here and below \( \|\cdot\| \) denotes the norm, and $(\cdot, \cdot)$ denotes the scalar product in \( L_2(0, \pi) \).

In this paper, we also use the following lemma (see \cite{LiLiu}), which is analogous to the Aubin–Lions compactness lemma in the case of fractional derivatives. This lemma is applied in situations where we have a sequence of functions \( \{u^n\} \) that is bounded in some functional space with high regularity (e.g., a Sobolev space), and we aim to show that this sequence has a subsequence that converges strongly in a less restrictive space (e.g., \( L^2 \)). Let us formulate the lemma in a form convenient for us.
\begin{lem}\label{li} (\,\cite{LiLiu}, Theorem 4.2). Let $T > 0$, $\alpha \in (0, 1)$. Let $M,  B, Y$  be Banach
spaces, such that  $M \hookrightarrow \hookrightarrow  B$ compactly and $B \hookrightarrow Y$ continuously. Suppose $W \subset L_1^{loc} ((0; T ); M)$ satisfies the following:
\begin{enumerate}
    \item There exists $C_1>0$ such that for all \( u \in W \) \, one has \, \(\sup\limits_{t\in (0, T)}  J_t^\alpha(||u||^2_M) \leq C_1 \);
    \item \( W \) is bounded in \( L_2((0,T); B)\);
    \item There exists $C_2 > 0$ such that for any $u \in W$ there is an assignment of initial value $u_0$ for $u$ so that the  Caputo derivative satisfies:
    \[
    ||D^\alpha_t u||_{L_2((0,T); Y)} \leq C_2.
    \]
\end{enumerate}
Then $W$ is relatively compact in $L_2((0, T); B)$.
\end{lem}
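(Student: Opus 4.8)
The natural route is to verify the hypotheses of Simon's compactness criterion for $L_2((0,T);B)$: a subset that is bounded in $L_2((0,T);B)$ is relatively compact there provided (a) for every $0<t_1<t_2<T$ the set of integrals $\big\{\int_{t_1}^{t_2} u(t)\,dt : u\in W\big\}$ is relatively compact in $B$, and (b) the time-translations are equicontinuous in the mean, i.e.\ $\sup_{u\in W}\|u(\cdot+h)-u(\cdot)\|_{L_2((0,T-h);B)}\to 0$ as $h\to 0^+$. Boundedness in $L_2((0,T);B)$ is exactly hypothesis (2), so the whole argument reduces to establishing (a) and (b).

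For (a) I would first extract from hypothesis (1) a quantitative bound on $\int_{t_1}^{t_2}\|u(t)\|_M^2\,dt$. Since the kernel $(t_2-s)^{\alpha-1}$ is bounded below by $(t_2-t_1)^{\alpha-1}$ for $s\in(t_1,t_2)$, evaluating $J_t^\alpha(\|u\|_M^2)$ at $t=t_2$ and invoking $\sup_t J_t^\alpha(\|u\|_M^2)\le C_1$ gives $\int_{t_1}^{t_2}\|u(t)\|_M^2\,dt\le \Gamma(\alpha)(t_2-t_1)^{1-\alpha}C_1$, uniformly in $u\in W$; letting $t_1\to 0^+$, $t_2\to T^-$ even yields a uniform $L_2((0,T);M)$-bound. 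By the Cauchy--Schwarz inequality the Bochner integrals $\int_{t_1}^{t_2}u(t)\,dt$ are then uniformly bounded in $M$, and the compact embedding $M\hookrightarrow\hookrightarrow B$ turns this into relative compactness in $B$, which is (a).

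The crux is (b). Here I would use Ehrling's interpolation lemma, available because $M\hookrightarrow\hookrightarrow B\hookrightarrow Y$: for each $\varepsilon>0$ there is $C_\varepsilon$ with $\|v\|_B\le \varepsilon\|v\|_M+C_\varepsilon\|v\|_Y$. Applying it to $v=u(\cdot+h)-u(\cdot)$ and integrating over $(0,T-h)$ splits the translation into an $M$-part, controlled by the uniform $L_2((0,T);M)$-bound from the previous step and hence made smaller than any prescribed tolerance by choosing $\varepsilon$ small, and a $Y$-part that must be shown to vanish as $h\to 0$ uniformly over $W$. This last point is where the fractional derivative enters: by Lemma \ref{J} one writes $u(t)=u_0+J_t^\alpha(D_t^\alpha u)(t)=u_0+\frac{1}{\Gamma(\alpha)}\int_0^t(t-s)^{\alpha-1}(D_s^\alpha u)(s)\,ds$, so the constant initial value $u_0$ cancels in the difference $u(\cdot+h)-u(\cdot)$ and the $Y$-translation is reduced to the time-translation of the Riemann--Liouville integral of $D_t^\alpha u$.

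Finally, extending $D_t^\alpha u$ by zero outside $(0,T)$ and writing the fractional integral as the convolution with the kernel $K(r)=r^{\alpha-1}/\Gamma(\alpha)$ for $r>0$ (and $K(r)=0$ for $r\le 0$), which is integrable on every bounded interval, the vector-valued Young convolution inequality gives $\|u(\cdot+h)-u(\cdot)\|_{L_2((0,T-h);Y)}\le \|K(\cdot+h)-K\|_{L_1(0,2T)}\,\|D_t^\alpha u\|_{L_2((0,T);Y)}\le C_2\,\|K(\cdot+h)-K\|_{L_1(0,2T)}$. Since translation is continuous in $L_1$, the factor $\|K(\cdot+h)-K\|_{L_1(0,2T)}\to 0$ as $h\to 0$, and the estimate is uniform over $W$ thanks to hypothesis (3). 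Combining this with the $M$-part bound verifies (b), and Simon's criterion then delivers the relative compactness of $W$ in $L_2((0,T);B)$. I expect the technical heart of the argument to be precisely this equicontinuity step --- the passage through Lemma \ref{J} expressing $u$ via its Caputo derivative, together with the uniform translation-continuity of the fractional integral operator --- while the integral bound and Ehrling's lemma are routine.
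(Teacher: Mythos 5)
The paper does not actually prove this lemma: it is quoted verbatim from Li and Liu (\cite{LiLiu}, Theorem 4.2) and used as a black box, so there is no internal proof to compare against. Judged on its own merits, your argument is correct in outline and in fact reconstructs essentially the mechanism of the original proof: verify the two conditions of Simon's compactness criterion, extract the uniform $L_2((0,T);M)$ bound from hypothesis (1) by bounding the kernel $(t_2-s)^{\alpha-1}$ from below by $(t_2-t_1)^{\alpha-1}$ on $(t_1,t_2)$, get relative compactness of the averages $\int_{t_1}^{t_2}u\,dt$ in $B$ from boundedness in $M$ and the compact embedding $M\hookrightarrow\hookrightarrow B$, and reduce equicontinuity of translations in $B$ (via Ehrling's lemma) to equicontinuity in $Y$, which follows from writing $u(\cdot+h)-u(\cdot)$ as a convolution of $D_t^\alpha u$ against the kernel difference $K(\cdot+h)-K(\cdot)$, $K(r)=r^{\alpha-1}/\Gamma(\alpha)$, and applying the vector-valued Young inequality together with $\|K(\cdot+h)-K\|_{L_1}\leq 2h^\alpha/\Gamma(\alpha+1)\to 0$, uniformly over $W$ by hypothesis (3). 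All of these steps are sound.

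One caveat deserves attention. Your appeal to Lemma \ref{J} to write $u(t)=u_0+J_t^\alpha(D_t^\alpha u)(t)$ is not literally licensed by that lemma, which is stated for scalar absolutely continuous functions, whereas elements of $W$ are only assumed to lie in $L_1^{loc}((0,T);M)$ with the stated integral bounds. In the framework of \cite{LiLiu}, the Caputo derivative appearing in hypothesis (3) is the generalized (weak) one, and the reconstruction formula $u=u_0+J_t^\alpha D_t^\alpha u$ is essentially the meaning of the phrase ``there is an assignment of initial value $u_0$ for $u$'' --- it is built into that hypothesis rather than deduced from Lemma \ref{J}. With that reading, the cancellation of $u_0$ in the difference $u(\cdot+h)-u(\cdot)$ and the subsequent convolution estimate are valid, and your proof is complete; without it, this single step is the only genuine gap in the argument.
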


Let us also note the following simple property of the fractional integral: Let $v(t)$ be a positive bounded function on $[0, T]$ and $\alpha\in (0,1]$. Then
\begin{equation}\label{integral}
\frac{T^{\alpha -1 }}{\Gamma(\alpha)\, }\int_0^tv(s) ds \leq J_t^\alpha v(t)\leq \frac{T^\alpha}{\alpha\, \Gamma(\alpha)} \max\limits_{t\in [0,T]} v(t).
\end{equation}

\section{Definition of the Weak Solution and Formulation of the Main Result}

Let us formulate the conditions on the functions $\omega$, $f$, $g$, and $H(t, x)$. Everywhere below, we will assume that
\begin{equation}\label{omega}
    \omega \in \dot{W}^1_2(0, \pi) \,\,\text{and} \,\int_0^\pi \varphi(x,y) \omega(y) dy= H(0, x).
\end{equation}

As for the function $f$, we will assume that it is continuous with respect to $(t, x, y) \in \overline{Q}_T$ and \( (f(t, x, \cdot), \omega) \neq 0 \) for all \( (t, x) \in [0, T] \times \overline{G} \). In what follows, we will assume that the following estimate is fulfilled: 
\begin{equation}\label{fomega}
f_\omega (t, x, y): =  \frac{f(t, x, y)}{(f(t, x, \cdot), \omega)}, \,\, \max_{ \overline{Q}_T}|f_\omega (t, x, y)|\leq \frac{1}{\sqrt{\pi}}\, f_M, 
\end{equation}
where $f_M$ is some positive constant.  

Now we formulate the conditions for the nonlinear function $g (t, x, y, u)$. First, it must be continuous in arguments $(t,x,y)\in \overline{Q}_T$ (for any $u$) and satisfy the standard Lipschitz condition:  for any $u$ and $v$ we assume, that
\begin{equation}\label{g_Lipschitz}
    |g(t,x, y, u)- g(t,x, y,  v)|\leq \ell_0  |u-v|,\,\, (t,x,y)\in \overline{Q}_T,
    \end{equation}
where $\ell_0$ is some positive constant. If we denote $_0 g (t, x, y, u)= g(t,x, y, u)- g(t, x, y, 0)$, then $_0 g(t,x, y, 0) \equiv 0,$ and 
\begin{equation}\label{g0}
g (t, x, y, u) =\,\,\, _0 g(t,x, y, u)+ g(t, x, y, 0).   \end{equation}
Lipschitz’s condition (\ref{g_Lipschitz}) implies
\begin{equation}\label{g0_L}
    |_0 g (t,x, y, u)|\leq \ell_0  |u|,\,\,  (t,x,y)\in \overline{Q}_T.
    \end{equation}
If necessary, we write the function $g$ in the form (\ref{g0}).

As for the function $H(t, x)$, we will assume that \( D_t^\alpha H(t, x), \, \, \Delta H(t,x) \in C([0, T] \times \overline{G}) \).

Let us apply the overdetermination condition \eqref{overdetermination} to derive a formal representation of the unknown function $h$. To achieve this, we multiply equation \eqref{1} by the function $\omega$ and integrate over the interval $(0, \pi)$. Then
\begin{equation*}
    D_t^{\alpha} H - \Delta H - ( u_{yy}, \omega) = (f(t, x, \cdot), \omega) \,h(t, x) + (g(t, x, \cdot, u), \omega).
\end{equation*}
Taking into account  property (\ref{omega}) of $\omega$, we get $(u_{yy}, \omega) = - (u_y,  \omega')$. Then the desired representation for \( h(t, x) \) can be written as:
\begin{equation}\label{h}
h(t, x) = \frac{D_t^{\alpha} H(t, x) -\Delta H(t, x)-(g(t, x, \cdot, u), \omega) + (u_y, \omega')}{(f(t, x, \cdot), \omega)}.
\end{equation}

Sometimes, when it does not cause confusion, we will write 
\begin{equation}\label{g(u)}
  g(u) := g(t, x, y, u(t, x, y)),\,\, \text{and}\,\,   h(u):=  h(t, x). 
\end{equation}

We now define a weak formulation of the inverse problem \eqref{1}–\eqref{overdetermination}.

\begin{defn}\label{opr1} 
Find a pair of functions \( \{ u(t, x, y), h(t, x) \} \), where \( h(t, x) \) has the form \eqref{h}, and the function \( u(t, x, y) \) satisfies the following conditions:
\begin{enumerate}
    \item \( u \in L_\infty((0, T); L_2(D)) \), \,\, \( u \in L_2(0, T; \dot{W}_2^1(D)) \);
    \item \( D_t^\alpha u \in L_2(0, T; L_2(D)) \);
    \item \( u(0, x, y) = \varphi(x, y) \) a.e.\ in \(D \);
    \item For any \( v \in \dot{W}_2^1(D) \) and almost every \( t \in (0, T] \), the following equality holds:
\end{enumerate}
\begin{equation}\label{equation_main}
    \int_D D_t^\alpha u v \, dx \, dy + \int_D (\nabla u \nabla v + u_y v_y) \, dx \, dy = \int_D f h v \, dx \, dy + \int_D g v \, dx \, dy.
\end{equation}
\end{defn}

We now state the main result of the paper.

\begin{thm}\label{theorem 1}

Let us assume that all the conditions given above regarding the functions $\omega$, $f$, $g$, and $H(t, x)$ are met. Then, the inverse problem has a unique weak solution. Moreover, the following estimates hold:
\begin{equation}\label{uestimate1}
 \sup_{t\in [0, T]} \|  u(t, \cdot, \cdot) \|_{L_2(D)}^2\leq K_1 ,
\end{equation}
 \begin{equation}\label{u_xestimate}
\int_0^t \left(\|  \nabla u (s, \cdot, \cdot)\|_{L_2(D)}^2 +  \, \| u_y (s, \cdot, \cdot) \|_{L_2(D)}^2 \right) ds\leq K_2,     
\end{equation}
\begin{equation}\label{Dalphaestimate}
\int_0^t\| D_s^{\alpha} u^n (s, \cdot, \cdot)\|^2_{L_2(D)}d s
    \leq K_3,
\end{equation}
\begin{equation}\label{h_estimate}
   \sup_{t\in [0, T]} ||h(t, \cdot)||_{L_2(G)}^2
    \leq K_4.
\end{equation}
Here $K_j$, $j=1,2,3, 4,$ are some positive constants, depending on the data of the problem.
\end{thm}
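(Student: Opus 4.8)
The plan is to solve the problem by the Galerkin method, exploiting the fact that the overdetermination condition lets us eliminate the unknown $h$: substituting the representation \eqref{h} into \eqref{1} turns the inverse problem into a single nonlinear, nonlocal equation for $u$ alone, in which the source $f\cdot h=f_\omega\bigl(D_t^\alpha H-\Delta H-(g,\omega)+(u_y,\omega')\bigr)$ now depends on $u$ (through $g(u)$ and through the first-order term $(u_y,\omega')$). I would fix an orthonormal basis $\{e_k\}$ of $L_2(D)$ consisting of Dirichlet eigenfunctions of $-\Delta-\partial_{yy}$ (also orthogonal in $\dot W_2^1(D)$), seek $u^n=\sum_{k=1}^n c_k^n(t)e_k$, and project the reduced equation onto $e_1,\dots,e_n$. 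Since $g$ is Lipschitz by \eqref{g_Lipschitz} and every remaining $u$-dependence in $fh$ is linear in the coefficients with bounded continuous coefficients — using that $(f,\omega)$ is bounded away from zero on the compact set $[0,T]\times\overline G$ — the result is a fractional ODE system $D_t^\alpha c^n+\Lambda c^n=F(t,c^n)$ with $F$ Lipschitz in $c^n$, whose solvability on $[0,T]$ follows from the standard Volterra/Mittag--Leffler theory once the a priori bounds below rule out blow-up.

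The core of the argument is the chain of a priori estimates, all derived on the Galerkin level and uniform in $n$. First I would test with $v=u^n$. By Lemma~\ref{Alixan1} the left side produces $\tfrac12 D_t^\alpha\|u^n\|^2+\|\nabla u^n\|^2+\|u^n_y\|^2$, while on the right the only dangerous contribution is $\int_G (u^n_y,\omega')\bigl(\int_0^\pi f_\omega u^n\,dy\bigr)dx$; using the bound \eqref{fomega} on $f_\omega$, Cauchy--Schwarz in $y$ and in $x$, and Young's inequality with a small parameter, this is absorbed into $\|u^n_y\|^2$ on the left, leaving $D_t^\alpha\|u^n\|^2\le c_1\|u^n\|^2+c_2(t)$ with $c_2$ built from the data $D_t^\alpha H,\Delta H,g(\cdot,0)$. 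Lemma~\ref{Alixan2L} together with the boundedness \eqref{E} of the Mittag--Leffler functions then yields \eqref{uestimate1}, and integrating the differential inequality (handling $\int_0^t D_s^\alpha\|u^n\|^2\,ds=J_t^{1-\alpha}(\|u^n(t)\|^2-\|u^n(0)\|^2)$ via Lemma~\ref{J} and \eqref{integral}) yields \eqref{u_xestimate}.

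Next I would test with $v=D_t^\alpha u^n$. Since derivatives commute, the elliptic terms become $\int_D\nabla u^n\cdot D_t^\alpha\nabla u^n+\int_D u^n_y D_t^\alpha u^n_y\ge\tfrac12 D_t^\alpha(\|\nabla u^n\|^2+\|u^n_y\|^2)$ by Lemma~\ref{Alixan1}, and bounding $\int_D(fh+g)D_t^\alpha u^n$ by Young moves half of $\|D_t^\alpha u^n\|^2$ to the left while producing $C(\|u^n_y\|^2+\|u^n\|^2)+$ data. Writing $w=\|\nabla u^n\|^2+\|u^n_y\|^2$, discarding $\|D_t^\alpha u^n\|^2$ gives $D_t^\alpha w\le Cw+C'$, so Lemma~\ref{Alixan2L} furnishes a \emph{uniform} bound on $\sup_t w$; reinserting this bound and integrating in $t$ gives \eqref{Dalphaestimate}. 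The sup-bound on $\|u^n_y\|^2$ is exactly what is needed to control the term $(u^n_y,\omega')$ in \eqref{h}, so, combined with the lower bound on $|(f,\omega)|$ and the Lipschitz estimate \eqref{g0_L} for $(g,\omega)$, it yields \eqref{h_estimate}.

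Finally, the uniform estimates let me pass to the limit: hypotheses (1)--(3) of Lemma~\ref{li} hold with $M=\dot W_2^1(D)$, $B=Y=L_2(D)$ (using \eqref{u_xestimate}, \eqref{integral}, and \eqref{Dalphaestimate}), so a subsequence of $\{u^n\}$ converges strongly in $L_2(0,T;L_2(D))$ and weakly in the higher-regularity spaces; strong convergence is precisely what is needed to pass to the limit in the nonlinear term $g(u^n)\to g(u)$ and in $h^n\to h$, after which the limit satisfies Definition~\ref{opr1} and the estimates \eqref{uestimate1}--\eqref{h_estimate} survive by lower semicontinuity. Uniqueness follows by testing the equation for the difference $\bar u=u_1-u_2$ (whose source difference again contains $\bar u_y$ and $g(u_1)-g(u_2)$) with $v=\bar u$, absorbing the gradient term as before and applying Lemma~\ref{Alixan2L} with $\bar u(0)=0$. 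The main obstacle throughout is the first-order term $(u_y,\omega')$ coming from the eliminated source $h$: because it carries a genuine derivative of the unknown, it must be absorbed into the dissipative energy rather than treated as a lower-order forcing, and the quantitative bound \eqref{fomega} on $f_\omega$ is what makes this absorption possible with room to spare.
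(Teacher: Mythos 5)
Your proposal is correct and follows the same overall architecture as the paper --- Galerkin approximation, Alikhanov's inequality (Lemma~\ref{Alixan1}), the fractional Gronwall lemma (Lemma~\ref{Alixan2L}), the Li--Liu compactness lemma (Lemma~\ref{li}) with $M=\dot W_2^1(D)$, $B=Y=L_2(D)$, lower semicontinuity for the limit estimates, and an energy argument for uniqueness --- but it differs in two genuine ways. First, the discretization: you expand in eigenfunctions of $-\Delta-\partial_{yy}$ on all of $D$ with scalar coefficients $c_k^n(t)$, so the approximate problem is a fractional ODE system with a Lipschitz right-hand side, solvable by standard Volterra/Mittag--Leffler theory; the paper discretizes only in $y$, writing $u^n=\sum_j c_j^n(t,x)\psi_j(y)$, so its coefficients solve a coupled \emph{system of subdiffusion PDEs} in $(t,x)$, whose well-posedness must be imported from the literature (the paper cites \cite{Gal} after verifying a Lipschitz property of the right-hand sides). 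Your reduction is more elementary and self-contained; the paper's keeps the $(t,x)$-dependence of $h$ structurally visible at the approximate level. Second, and more substantively, after testing with $D_t^\alpha u^n$ you do not discard the terms $D_t^\alpha\|\nabla u^n\|^2+D_t^\alpha\|u^n_y\|^2$, but run Lemma~\ref{Alixan2L} on $w=\|\nabla u^n\|^2+\|u^n_y\|^2$ (using that $\|\varphi^n\|_{\dot W_2^1(D)}$ is bounded by \eqref{n_initial_limit}) to obtain a \emph{uniform-in-time} bound $\sup_t w\le C$; estimate \eqref{Dalphaestimate} then follows by reinsertion and integration, and \eqref{h_estimate} follows immediately since $\|h^n\|_{L_2(G)}^2\le A_1+A_2\|u^n\|_{L_2(D)}^2+A_3\|u^n_y\|_{L_2(D)}^2$. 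The paper instead applies $J_t^\alpha$ and works only with integral-type bounds; in particular its Lemma~\ref{hestimatelemma} justifies the sup bound \eqref{hestimate} by citing the time-integrated estimate \eqref{u_y}, which by itself does not control $\|u^n_y(t,\cdot,\cdot)\|^2$ pointwise in $t$, so your uniform gradient bound is precisely the ingredient that makes that step airtight --- your route is tighter there. One small citation slip on your side: hypothesis (1) of Lemma~\ref{li} cannot be deduced from \eqref{u_xestimate} together with \eqref{integral}, because the upper bound in \eqref{integral} requires a sup of the integrand, not its time integral; it follows instead from your own uniform bound on $w$ (or from the paper's \eqref{estimateLi}), so the ingredient is at hand --- just attribute it correctly.
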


\section{A priori estimates}

To prove the existence of weak solutions, we use the Galerkin method, i.e., first we construct Galerkin's approximations, establish a priori estimates, and pass to the limit.

Let $\psi_k\in \dot{W}^1_2(0, \pi)$ be an orthonormal family in $L_2(0, \pi)$ which linear combinations are dense in $\dot{W}^1_2(0, \pi)$ \cite{Lions}. Given $n\in \mathbb{N}$, let us
consider the $n$-dimensional space $V^n$ spanned by $\psi_1, \cdots, \psi_n$. For each
$n\in \mathbb{N}$, we search for approximate solutions
 to the inverse problem \eqref{1}–\eqref{overdetermination}:
\begin{equation}\label{6}
    u^n(t, x, y) = \sum_{j=1}^n c_j^n(t, x) \psi_j(y),
\end{equation}
where the coefficients $c_1^n(t,x), \cdots , c_n^n(t,x)$ are unknown. 
Due to the conditions of the problem (see Definition \ref{opr1}) the functions $c_j^n(t, x)$ must satisfy the boundary condition
\begin{equation}\label{boundary_c}
c_j^n(t, x) = 0,\,\, x\in \partial G,\,\, t\in (0, T),    
\end{equation}
the equality is understood in the sense of traces.

To find functions $c_j^n(t, x)$ in equation (\ref{equation_main}) instead of $u$ we take $u^n$ and instead of $v$ we take the function $w(x)\psi_k(y)$, $w\in \dot{W}^1_2(G)$. Then
\begin{equation}\label{equation2}
\int_D D_t^\alpha u^n w\, \psi_k\, dx\, dy + \int_D \nabla u^n \nabla w \, \psi_k\, dx\, dy + \int_D u^n_y  \, \psi_k'\,w\, dx\, dy 
\end{equation}
\[
= \int_D (g + f h^n)\, w \, \psi_k\, dx\, dy,\, k=1, 2, \cdots n,
\]
where 
\begin{equation}\label{h_n}
   h(t, x, u^n) := h^n(t, x)
    \end{equation}
    \[
    = \frac{D_t^{\alpha} H(t, x) -\Delta H(t, x)-(g(t, x, \cdot, u^n(t,x, \cdot)), \omega) + (u^n_y(t,x, \cdot)\, , \, \omega')}{(f(t, x, \cdot), \omega)}.
\]
Due to the orthonormality of the system $\{\psi_k\}$, the equation (\ref{equation2}) can be rewritten as
\begin{equation}\label{equation3}
\int_G  D_t^\alpha c_k^n w \, dx + \int_G \nabla c_k^n \, \nabla w \,  dx + \int_G \sum_{j=1}^n a_{j,k}\, c^n_j \, w\, dx  
\end{equation}
\[
= \int_G  (g_k + f_k h^n) w \, dx, \, k=1, 2, \cdots n,
\]
where
\[
a_{j,k} =   (\psi'_j,\, \psi'_k), \, \, f_k(t, x) =  (f (t, x, \cdot), \, \psi_k),
\]
and
\[
g_k(t, x, u^n) = \int_0^\pi  g(t, x, y, \sum_{j=1}^n c_j^n(t,x)\, \psi_j (y) )\, \psi_k(y)\, dy.
\]
The system (\ref{equation3}) of subdiffusion equations is supplemented with the following
Cauchy data
\[
u^n(0, x, y) = \varphi^n (x, y) = \sum_{j=1}^n b_j^n(x) \, \psi_j(y), \,\,  (x, y)\in D,
\]
or
\begin{equation}\label{n_initial}
  c_j^n(0, x) \, = \, b^n_j(x),   \,\, x\in G,
\end{equation}
and assume that 
\begin{equation}\label{n_initial_limit}
\varphi^n (x, y) \to \varphi(x, y)\, \, \text{as} \,\, n\to \infty \,\, \text{in}\,\, \dot{W}_2^1(D).
    \end{equation}

    Equation (\ref{equation3}), where $h^n$ has the form (\ref{h_n}), with the boundary and initial conditions (\ref{boundary_c}) and (\ref{n_initial}) is a weak statement of the following initial-boundary value problem for the system of subdiffusion equations with respect to functions $c_k^n (t, x)$, $ k=1, 2, \cdots,  n,$
    \begin{equation}\label{Cauchy}
      D_t^\alpha c_k^n (t, x)  - \Delta c_k^n (t, x) + \sum_{k=1}^n a_{j,k} c_k^n (t, x) = g_k (t, x, u^n) + f_k (t, x) h^n (t, x).
    \end{equation} 
    Now we will show that the right-hand sides of these equations satisfy the Lipschitz condition. 
    
    Let $u^n = \sum_1^n c_j^n \psi_j$ and $v^n = \sum_1^n d_j^n \psi_j$. Apply the Lipschitz condition, the Cauchy-Schwarz inequality, and the Parseval equality to obtain.
    \[
    |g_k(t, x, u^n) - g_k(t, x, v^n)| \leq \int_0^\pi |g(t, x, y, u^n) - g(t, x, y, v^n)| |\psi_k(y)| dy
    \]
   \[
  \leq \ell_0 \int_0^\pi |u^n -v^n||\psi_k(y)| dy \leq \ell_0 ||u^n -v^n|| = \ell_0 \left( \sum_{j=1}^n |c_j^n - d_j^n|^2\right)^{\frac{1}{2}}\leq \ell_0 \sum_{j=1}^n |c_j^n - d_j^n|.
   \]
   Therefore, $g_k(t, x, u^n)$ satisfies the Lipschitz condition. Similarly, we get
   \[
  |( (g(t, x, \cdot, u^n) - (g(t, x, \cdot, v^n)), \omega)|\leq \ell_0 ||\omega|| \left( \sum_{j=1}^n |c_j^n - d_j^n|^2\right)^{\frac{1}{2}}\leq \ell_0 ||\omega||\sum_{j=1}^n |c_j^n - d_j^n|.
   \]
   Next we obtain
   \[
   |( u_y^n - v_y^n , \omega)|\leq \sum_{j=1}^n |c_j^n - d_j^n| \int_0^\pi |\psi_j'(y) \omega(y)| dy \leq ||\omega||\sum_{j=1}^n |c_j^n - d_j^n|.
   \]
   It is not difficult to verify that from the last two estimates it follows that $f_k (t, x) h^n (t, x):=  f_k (t, x) h(t, x, u^n)$ satisfies the Lipschitz condition with respect to $c_k^n$, which is what was required to be proved.

   In the case where the right-hand side of equation (\ref{Cauchy}) satisfies the Lipschitz condition, the correctness of the initial-boundary value problem (\ref{Cauchy}), (\ref{boundary_c}) and (\ref{n_initial}) is well known (see, for example, \cite{Gal}, Section 3).

Having made sure that $c_k^n(t, x)$ exist for all $k$, we proceed to estimating the function $u^n$ and its derivatives involved in problem (\ref{equation2}) - (\ref{h_n}).
\begin{lem}\label{un_estimateLemam} There is a positive constant $K_1$ such that for all $n$
\begin{equation}\label{un_estimate}
    \| u^n(t, \cdot, \cdot) \|_{L_2(D)}^2\leq K_1 .
\end{equation}
\end{lem}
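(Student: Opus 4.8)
The plan is to derive a fractional differential inequality for $v(t) := \|u^n(t,\cdot,\cdot)\|_{L_2(D)}^2$ and then close it using Alikhanov's Grönwall-type estimate, Lemma \ref{Alixan2L}. Since $u^n(t,\cdot,\cdot)$ lies in the span of $\psi_1,\dots,\psi_n$ with coefficients in $\dot W_2^1(G)$, it is itself an admissible test function; substituting $v=u^n$ into the weak identity \eqref{equation_main} (equivalently, multiplying the $k$-th equation in \eqref{equation3} by $c_k^n$, summing over $k$, and integrating over $G$) yields
\[
\int_D D_t^\alpha u^n\, u^n\,dx\,dy + \|\nabla u^n\|_{L_2(D)}^2 + \|u^n_y\|_{L_2(D)}^2 = \int_D f h^n u^n\,dx\,dy + \int_D g u^n\,dx\,dy.
\]
Applying Lemma \ref{Alixan1} to the first term bounds it from below by $\tfrac12 D_t^\alpha \|u^n\|_{L_2(D)}^2$. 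The two energy terms on the left are nonnegative, and the key point of the argument is that I will retain $\|u^n_y\|_{L_2(D)}^2$ on the left rather than discard it, since it is needed to absorb the most dangerous source term.

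Next I would estimate the right-hand side. For $\int_D g u^n$ I use the splitting \eqref{g0} together with the bound \eqref{g0_L}, obtaining $\big|\int_D g u^n\big| \le \ell_0\|u^n\|_{L_2(D)}^2 + \|g(\cdot,0)\|_{L_2(D)}\|u^n\|_{L_2(D)}$, where $\|g(\cdot,0)\|_{L_2(D)}$ is bounded because $g(t,x,y,0)$ is continuous on the compact $\overline Q_T$. For the source term I insert the representation \eqref{h_n} and write $f h^n = f_\omega\big[D_t^\alpha H - \Delta H - (g(u^n),\omega) + (u^n_y,\omega')\big]$, where by \eqref{fomega} the factor satisfies $\|f_\omega(t,x,\cdot)\|_{L_2(0,\pi)} \le f_M$. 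Setting $U(t,x) := (f_\omega(t,x,\cdot), u^n(t,x,\cdot))$, so that $|U(t,x)| \le f_M\|u^n(t,x,\cdot)\|$, the source integral becomes $\int_G \big[D_t^\alpha H - \Delta H - (g(u^n),\omega) + (u^n_y,\omega')\big] U\,dx$. Its first two contributions (the bounded data $D_t^\alpha H - \Delta H$ and the term $(g(u^n),\omega)$, handled via \eqref{g0_L} and the boundedness of the $H$-data) are controlled, after Cauchy–Schwarz in $x$ and Young's inequality, by $C\|u^n\|_{L_2(D)}^2 + C'$.

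The main obstacle is the last contribution, $\int_G (u^n_y,\omega')\,U\,dx$. After Cauchy–Schwarz in $y$ (using $\omega\in\dot W_2^1(0,\pi)$, so $\|\omega'\|<\infty$) and in $x$, it is bounded by $f_M\|\omega'\|\,\|u^n_y\|_{L_2(D)}\,\|u^n\|_{L_2(D)}$, which genuinely involves the $y$-gradient norm, so a crude estimate cannot close the inequality. The resolution is Young's inequality with a small parameter: $f_M\|\omega'\|\,\|u^n_y\|_{L_2(D)}\|u^n\|_{L_2(D)} \le \varepsilon\|u^n_y\|_{L_2(D)}^2 + \tfrac{(f_M\|\omega'\|)^2}{4\varepsilon}\|u^n\|_{L_2(D)}^2$. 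Choosing $\varepsilon<1$ allows $\varepsilon\|u^n_y\|_{L_2(D)}^2$ to be absorbed into the $\|u^n_y\|_{L_2(D)}^2$ retained on the left, leaving there a nonnegative coefficient. Discarding the remaining nonnegative energy terms, I arrive at $D_t^\alpha v(t) \le c_1 v(t) + c_2(t)$ with $c_1>0$ and $c_2$ bounded on $[0,T]$.

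Finally I apply Lemma \ref{Alixan2L} to this inequality. The Mittag–Leffler factors are bounded via \eqref{E} on $[0,c_1 T^\alpha]$, the term $J_t^\alpha c_2$ is bounded via \eqref{integral} since $c_2$ is bounded, and $v(0)=\|\varphi^n\|_{L_2(D)}^2$ is bounded uniformly in $n$ by \eqref{n_initial_limit}. Hence $v(t)\le K_1$ for a constant $K_1$ independent of $n$ and of $t\in[0,T]$, which is exactly \eqref{un_estimate}.
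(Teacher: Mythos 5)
Your proof is correct and follows essentially the same route as the paper: testing the Galerkin identity with $u^n$ itself, applying Lemma \ref{Alixan1}, absorbing the critical term coming from $f_\omega\,(u_y^n,\omega')$ into the retained $\|u_y^n\|_{L_2(D)}^2$ on the left via Young's inequality with a small parameter, and closing with the fractional comparison estimate of Lemma \ref{Alixan2L} together with \eqref{E}, \eqref{integral}, and the uniform bound on $\|\varphi^n\|_{L_2(D)}$ from \eqref{n_initial_limit}. The only (cosmetic) difference is organizational: the paper first bounds $\|f h^n\|_{L_2(D)}^2$ as a whole in \eqref{fh^n_integral} (an estimate it reuses in later lemmas) and fixes $\varepsilon=(3f_M^2\|\omega'\|^2)^{-1}$, whereas you pair the terms of $h^n$ directly against $(f_\omega,u^n)$ and take a generic $\varepsilon<1$.
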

\begin{proof}
In equation (\ref{equation2}) instead of $w(x)$ we take $c_k^n(t, x)$ and sum up by $k$ from $1$ to $n$. Then
\[
\int_D D_t^\alpha u^n u^n \, dx\, dy + \| \nabla u^n \|_{L_2(D)}^2 +  \| u^n_y \|_{L_2(D)}^2
= \int_D (g + f h^n)\, u^n\, dx\, dy.
\]
For the integral on the left-hand side, we apply Alikhanov’s estimate (see Lemma \ref{Alixan1}), obtaining:
\begin{equation}\label{Du^n}
\frac{1}{2} D_t^\alpha \| u^n \|_{L_2(D)}^2 + \| \nabla u^n \|_{L_2(D)}^2 +  \| (u^n)_y \|_{L_2(D)}^2
= \int_D (g + f h^n)\, u^n\, dx\, dy.
\end{equation} 
Let us evaluate the right side separately. We have (see (\ref{g0}) and (\ref{g0_L}))
\[
\left|\int_D g(t,x,y, u^n)\, u^n\, dx\, dy\right| \leq \int_D |g(t,x,y, 0)\, u^n|\, dx\, dy + \int_D |_0\, g(t,x,y, u^n)\, u^n|\, dx\, dy
\]
\begin{equation}\label{gu^n}
    \leq \frac{1}{2} ||g(t, \cdot, \cdot, 0)||_{L_2(D)}^2 + \frac{1}{2} ||u^n||_{L_2(D)}^2 + \ell_0 \,  ||u^n||_{L_2(D)}^2.
\end{equation}

On the other hand, using the estimate $2\, ab \leq \varepsilon a^2 + \frac{1}{\varepsilon} b^2$, we will have  
\begin{equation}\label{fu^n}
    \left|\int_D f h^n\, u^n\, dx\, dy\right |\leq \frac{\varepsilon}{2} ||f h^n||_{L_2(D)}^2 + \frac{1}{2\varepsilon} ||u^n||_{L_2(D)}^2,
\end{equation}
the number $\varepsilon>0$ will be chosen later.
Next, we estimate the first term on the right-hand side. We have (see (\ref{fomega}) and (\ref{h}))
\[
\left|\left|\frac{D_t^{\alpha} H(t, x) - \Delta\, H(t, x)}{(f(t, x, \cdot), \omega)} f(t,x, y) \right|\right|_{L_2(D)}^2\leq f_M^2 \left(||D_t^{\alpha} H(t, \cdot)|_{L_2(G)}^2 + ||\Delta\, H(t, \cdot)||_{L_2(G)}^2\right).
\]
We also have
\begin{equation}\label{gestimate}
    |g(t,x,y, u^n),\, \omega)|^2\leq 2 | g(t,x,y, 0),\, \omega)|^2 + 2\, | _0\, g(t,x,y, u^n),\, \omega)|^2 
\end{equation}
\[
\leq 2 ||g(t,x, \cdot, 0)||^2 ||\omega||^2 +2 \ell^2_0 ||u^n (t,x, \cdot)||^2 ||\omega||^2. 
\]
Therefore
\[
\left|\left|\frac{g(t,x,\cdot, u^n),\, \omega)}{(f(t, x, \cdot), \omega)} f(t,x, y) \right|\right|_{L_2(D)}^2\leq 2\, f_M^2 \, ||\omega||^2\left(||g(t,\cdot, \cdot, 0)||_{L_2(D)}^2 + \ell^2_0 ||u^n ||_{L_2(D)}^2\right).
\]
If we use for $(u_y^n, \, \omega')$ Cauchy–Schwarz inequality (see (\ref{fomega})), then
\[
\left|\left|\frac{(u_y^n, \, \omega')}{(f(t, x, \cdot), \omega)} f(t,x, y) \right|\right|_{L_2(D)}^2\leq  f_M^2 \, ||\omega'||^2\, ||u^n_y ||_{L_2(D)}^2.
\]
From the last four estimates, in particular, we obtain
\begin{equation}\label{fh^n_integral}
  ||f h^n||_{L_2(D)}^2\leq f_M^2 \left(H_0^2(t)+ 2 ||\omega||^2 (  ||g(t,\cdot, \cdot, 0)||_{L_2(D)}^2 + \ell^2_0 ||u^n ||_{L_2(D)}^2) + ||\omega'||^2 ||u_y^n|||_{L_2(D)}^2\right),
\end{equation}
where
\[
H_0^2(t) = ||D_t^{\alpha} H(t, \cdot)|_{L_2(G)}^2 + ||\Delta\, H(t, \cdot)||_{L_2(G)}^2.
\]

Due to inequality $(a+b+c)^2 \leq 3( a^2 + b^2 + c^2)$, we can rewrite (\ref{fu^n}) as
\[
  \left|\int_D f h^n\, u^n\, dx\, dy\right |\leq \frac{3}{2} \varepsilon f_M^2 \left[ H_0^2(t) +2 ||\omega||^2 ||g(t, \cdot, \cdot, 0)||^2_{L_2(D)}\right]
\]
\[
+\left(3\, \varepsilon\, \ell_0^2\, f_M^2\, ||\omega||^2 +\frac{1}{2 \varepsilon}\right) ||u^n||_{L_2(D)}^2 + \frac{3}{2} \varepsilon f_M^2 ||\omega'||^2 ||u_y^n||_{L_2(D)}^2.
\]
Now we choose the number $\varepsilon$ so that the coefficient before \(||u_y^n||_{L_2(D)}^2\) is $\frac{1}{2}$, i.e., $\varepsilon = (3 f_M^2 ||\omega'||^2)^{-1}$. Then we get the final estimate
\begin{equation}\label{fu^nNew}
    \left|\int_D f h^n\, u^n\, dx\, dy\right |\leq \frac{1}{2 \, ||\omega'||^2} \left[ H_0^2(t) +2 ||\omega||^2 ||g(t, \cdot, \cdot, 0)||^2_{L_2(D)}\right]
    \end{equation}
    \[
    + \left[\frac{\ell_0^2 ||\omega||^2}{||\omega'||^2} + \frac{3 f_M^2 ||\omega'||^2}{2}\right] ||u^n||_{L_2(D)}^2 + \frac{1}{2} ||u_y^n||_{L_2(D)}^2.
    \]

Taking into account the estimate (\ref{gu^n}), we transform equality (\ref{Du^n}) into the following inequality
\begin{equation}\label{Du^n_1}
\frac{1}{2} D_t^\alpha \| u^n \|_{L_2(D)}^2 + \| \nabla u^n \|_{L_2(D)}^2 +  \frac{1}{2} \| u^n_y \|_{L_2(D)}^2
\end{equation}
\[
\leq \left(\frac{1}{2} +  \frac{||\omega||^2}{||\omega'||^2}\right) ||g(t, \cdot, \cdot, 0)||_{L_2(D)}^2 + \frac{H_0^2(t)}{2 ||\omega'||^2} + L_\omega^2 \| u^n \|_{L_2(D)}^2,
\]
where
\[
L_\omega^2 = \frac{\ell_0^2 ||\omega||^2}{||\omega'||^2} + \frac{3 f_M^2 ||\omega'||^2}{2} + \ell_0 + \frac{1}{2}.
\]
If we discard the last two terms on the left-hand side of (\ref{Du^n_1}), then
\[
D_t^\alpha \| u^n \|_{L_2(D)}^2 \leq 2\,L_\omega^2\,\| u^n \|_{L_2(D)}^2 +c_2(t),
\]
where
\[
c_2(t) = \left(1 +  \frac{2||\omega||^2}{||\omega'||^2}\right) ||g(t, \cdot, \cdot, 0)||_{L_2(D)}^2 + \frac{H_0^2(t)}{ ||\omega'||^2}.
\]
Apply Lemma \ref{Alixan2L} to obtain
\[
\| u^n \|_{L_2(D)}^2\leq \| \varphi^n \|_{L_2(D)}^2 \,E_\alpha(2\,L_\omega^2\, t^\alpha) + \Gamma(\alpha) \,E_{\alpha, \alpha}(2\,L_\omega^2\, t^\alpha) \, J_t^\alpha c_2(t).
\]
Due to the boundedness  of the Mittag-Leffler function (see (\ref{E})) and the estimate (\ref{integral}), we obtain the final estimate
\begin{equation}\label{un_final}
    \| u^n(t, \cdot, \cdot) \|_{L_2(G)}^2\leq M\| \varphi^n \|_{L_2(D)}^2  + \frac{MT^\alpha}{\alpha} \,\max_{[0,T]}c_{2}(t) .
\end{equation}
\end{proof}
The lemma is proved.

Let us obtain estimates for the derivatives of $u^n$. 
\begin{lem}\label{nablaestimate} There is a positive constant $K_2$ such that for all $n$
\begin{equation}\label{un_nablaestimate}
 \int_0^t \left(\|  \nabla u^n (s, \cdot, \cdot)\|_{L_2(D)}^2 +  \, \| u^n_y (s, \cdot, \cdot) \|_{L_2(D)}^2 \right) ds\leq K_2.     
 \end{equation}
\end{lem}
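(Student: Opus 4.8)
The plan is to reuse the pointwise-in-$t$ inequality (\ref{Du^n_1}) already derived in the proof of Lemma \ref{un_estimateLemam}, but now to \emph{retain} the gradient and $y$-derivative terms on the left instead of discarding them, and to recover the ordinary time integral in the statement by applying the Riemann--Liouville fractional integral $J_t^\alpha$ together with the lower bound in (\ref{integral}). First I would observe that, thanks to Lemma \ref{un_estimateLemam}, the term $L_\omega^2\|u^n\|_{L_2(D)}^2$ on the right-hand side of (\ref{Du^n_1}) is bounded by $L_\omega^2 K_1$ uniformly in $n$ and $t$, while the remaining terms $\|g(t,\cdot,\cdot,0)\|_{L_2(D)}^2$ and $H_0^2(t)$ are continuous, hence bounded, on $[0,T]$ by the standing hypotheses on $g$ and $H$. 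Consequently (\ref{Du^n_1}) yields
\[
\frac{1}{2} D_t^\alpha \| u^n \|_{L_2(D)}^2 + \| \nabla u^n \|_{L_2(D)}^2 + \frac{1}{2} \| u^n_y \|_{L_2(D)}^2 \leq C_0,
\]
with a constant $C_0$ independent of $n$ and $t$.

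Next I would rearrange this as
\[
\| \nabla u^n \|_{L_2(D)}^2 + \frac{1}{2} \| u^n_y \|_{L_2(D)}^2 \leq C_0 - \frac{1}{2} D_t^\alpha \| u^n \|_{L_2(D)}^2
\]
and apply $J_t^\alpha$ to both sides. On the right, $J_t^\alpha C_0 = C_0\,t^\alpha/\Gamma(\alpha+1)$ and, by Lemma \ref{J} applied to the function $t\mapsto\|u^n(t,\cdot,\cdot)\|_{L_2(D)}^2$, one has $J_t^\alpha D_t^\alpha \|u^n\|_{L_2(D)}^2 = \|u^n(t)\|_{L_2(D)}^2 - \|\varphi^n\|_{L_2(D)}^2$. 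On the left I would invoke the left inequality in (\ref{integral}) to bound $J_t^\alpha$ of the nonnegative integrand from below by $\frac{T^{\alpha-1}}{\Gamma(\alpha)}\int_0^t(\cdots)\,ds$. Combining these, I get
\[
\frac{T^{\alpha-1}}{\Gamma(\alpha)}\int_0^t\!\left(\| \nabla u^n \|_{L_2(D)}^2 + \tfrac{1}{2}\| u^n_y \|_{L_2(D)}^2\right)ds \leq \frac{C_0\,T^\alpha}{\Gamma(\alpha+1)} + \tfrac{1}{2}\|\varphi^n\|_{L_2(D)}^2 - \tfrac{1}{2}\|u^n(t)\|_{L_2(D)}^2.
\]
Dropping the last, nonpositive, term and using that $\|\varphi^n\|_{L_2(D)}$ is bounded uniformly in $n$ (because $\varphi^n\to\varphi$ in $\dot W_2^1(D)$, hence in $L_2(D)$, by (\ref{n_initial_limit})), I would arrive at the claimed bound (\ref{un_nablaestimate}) with an explicit $K_2$, absorbing the factor $\tfrac{1}{2}$ in front of $\|u^n_y\|_{L_2(D)}^2$ into the constant.

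The only genuinely delicate point is the treatment of the fractional term: an ordinary integration in $t$ of $D_t^\alpha\|u^n\|_{L_2(D)}^2$ does not telescope, so the identity (\ref{JD}) of Lemma \ref{J} must be used, and this forces the use of $J_t^\alpha$ rather than $\int_0^t$. The lower bound in (\ref{integral}) is then exactly the device that converts $J_t^\alpha$ of the nonnegative energy terms back into the ordinary integral appearing in the statement. For the application of Lemma \ref{J} one should also record that $t\mapsto\|u^n(t,\cdot,\cdot)\|_{L_2(D)}^2=\sum_{j=1}^n\|c_j^n(t,\cdot)\|_{L_2(G)}^2$ is absolutely continuous on $[0,T]$, which follows from the regularity of the solution of the Galerkin system (\ref{Cauchy}) established above.
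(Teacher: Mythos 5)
Your proposal is correct and follows essentially the same route as the paper: both start from inequality (\ref{Du^n_1}), apply $J_t^\alpha$ together with Lemma \ref{J}, use the lower bound in (\ref{integral}) to convert $J_t^\alpha$ of the nonnegative energy terms into the ordinary time integral, and control the remaining terms via Lemma \ref{un_estimateLemam} and the boundedness of $\|g(t,\cdot,\cdot,0)\|_{L_2(D)}^2$ and $H_0^2(t)$; the only (cosmetic) difference is that you bound the right-hand side pointwise by a constant before applying $J_t^\alpha$, whereas the paper applies $J_t^\alpha$ first and then invokes (\ref{un_final}) and the upper bound in (\ref{integral}). In fact your treatment is slightly more careful than the paper's, since you explicitly retain the initial-data term $\|\varphi^n\|_{L_2(D)}^2$ produced by Lemma \ref{J} and note the absolute continuity of $t\mapsto\|u^n(t,\cdot,\cdot)\|_{L_2(D)}^2$ needed for its application.
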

\begin{proof}

To prove the lemma, we will apply the operator $J_t^\alpha$ to both parts of inequality (\ref{Du^n_1}) and use Lemma \ref{J}. Then
\begin{equation}\label{Du^n_final}
\| u^n \|_{L_2(G)}^2 + \, 2\, J_t^\alpha \| \nabla u^n \|_{L_2(D)}^2 +  \, J_t^\alpha \| u^n_y \|_{L_2(D)}^2 \leq 2\,L_\omega^2\,J_t^\alpha\, \| u^n \|_{L_2(D)}^2 +J_t^\alpha\, c_2(t).
\end{equation}
First, we omit the first term on the left-hand side of this inequality, then, applying the estimates (\ref{integral}) and (\ref{un_final}), we obtain
\[
\frac{T^{\alpha -1 }}{\Gamma(\alpha)\, }\int_0^t \left(\| 2 \, \nabla u^n (s, \cdot, \cdot)\|_{L_2(D)}^2 +  \, \| u^n_y (s, \cdot, \cdot) \|_{L_2(D)}^2 \right) ds 
\]
\[
\leq 
 \frac{T^\alpha}{\alpha\, \Gamma (\alpha)}2\,L_\omega^2\,\left[M\| \varphi^n \|_{L_2(D)}^2  + \frac{MT^\alpha}{\alpha} \,\max_{[0,T]}c_{2}(t) \right] + \frac{T^\alpha}{\alpha\, \Gamma (\alpha)} \max_{[0,T]}\, c_2(t),
 \]
 or finally
\begin{equation}\label{u_y}
 \int_0^t \left(\| 2 \, \nabla u^n (s, \cdot, \cdot)\|_{L_2(D)}^2 +  \, \| u^n_y (s, \cdot, \cdot) \|_{L_2(D)}^2 \right) ds     
 \end{equation}
\[
\leq 
 \frac{T}{\alpha}\left[2\,L_\omega^2\,M\| \varphi^n \|_{L_2(D)}^2  + \left(\frac{2\,L_\omega^2MT^\alpha}{\alpha} +1\right)\,\max_{[0,T]}c_{2}(t) \right]. 
 \]
 This is the statement of the lemma.
\end{proof}
Estimate (\ref{Du^n_final}), in particular, implies the following estimate necessary for applying Lemma \ref{li} on compactness.
\begin{equation}\label{estimateLi}
    \sup\limits_{t\in (0, T)}  J_t^\alpha(\| \nabla u^n \|_{L_2(D)}^2 +  \, \| u^n_y \|_{L_2(D)}^2) \leq C_1,\,\, C_1 >0.
\end{equation}

Let us move on to estimating the fractional derivative $D_t^\alpha u^n$.
\begin{lem}\label{DunMain} There is a positive constant $K_3$ such that for all $n$
\begin{equation}\label{Destimate}
  \int_0^t\| D_\tau^{\alpha} u^n (\tau, \cdot, \cdot)\|^2_{L_2(D)}d \tau
    \leq K_3.  
\end{equation}
\end{lem}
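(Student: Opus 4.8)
The plan is to derive the bound by using $D_t^\alpha u^n$ itself as a test function. Concretely, in the Galerkin identity (\ref{equation2}) I would take $w(x)=D_t^\alpha c_k^n(t,x)$ and sum over $k$, which is the same as inserting $v=D_t^\alpha u^n=\sum_{j=1}^n D_t^\alpha c_j^n(t,x)\,\psi_j(y)$ into (\ref{equation_main}) written for $u^n$. This is an admissible choice, since the regularity of the solution of the system (\ref{Cauchy}) gives $D_t^\alpha c_k^n(t,\cdot)\in\dot W_2^1(G)$, and $D_t^\alpha$ commutes with the spatial derivatives $\nabla$ and $\partial_y$. The result is the energy identity
\[
\|D_t^\alpha u^n\|_{L_2(D)}^2 + \int_D \left(\nabla u^n\cdot D_t^\alpha\nabla u^n + u_y^n\, D_t^\alpha u_y^n\right)dx\,dy = \int_D (g + fh^n)\, D_t^\alpha u^n\, dx\,dy.
\]

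The first step is to bound the two middle terms from below. Since $\nabla u^n$ and $u_y^n$ are absolutely continuous in $t$ with values in $L_2(D)$, Alikhanov's Lemma \ref{Alixan1} gives $\int_D \nabla u^n\cdot D_t^\alpha\nabla u^n\,dx\,dy\ge \frac{1}{2}D_t^\alpha\|\nabla u^n\|_{L_2(D)}^2$ and likewise for $u_y^n$. Estimating the right-hand side by Young's inequality as $\int_D (g+fh^n)D_t^\alpha u^n \le \frac{1}{2}\|g+fh^n\|_{L_2(D)}^2 + \frac{1}{2}\|D_t^\alpha u^n\|_{L_2(D)}^2$ and absorbing the last term on the left, I arrive at the pointwise differential inequality
\[
\|D_t^\alpha u^n\|_{L_2(D)}^2 + D_t^\alpha\|\nabla u^n\|_{L_2(D)}^2 + D_t^\alpha\|u_y^n\|_{L_2(D)}^2 \le \|g + fh^n\|_{L_2(D)}^2.
\]

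The main obstacle is passing from this pointwise bound to the claimed \emph{ordinary} time integral, because the two middle terms are Caputo derivatives of the quantities $\|\nabla u^n\|^2$ and $\|u_y^n\|^2$, which need not be monotone in $t$. I would resolve this by integrating over $(0,t)$ and invoking the identity, valid for absolutely continuous $\psi\ge 0$,
\[
\int_0^t D_\tau^\alpha \psi(\tau)\, d\tau = J_t^{1-\alpha}\psi(t) - \frac{t^{1-\alpha}}{\Gamma(2-\alpha)}\psi(0) \ge -\frac{t^{1-\alpha}}{\Gamma(2-\alpha)}\psi(0),
\]
which follows from Fubini's theorem followed by a single integration by parts, together with $J_t^{1-\alpha}\psi\ge 0$. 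Applying this with $\psi=\|\nabla u^n\|_{L_2(D)}^2$ and $\psi=\|u_y^n\|_{L_2(D)}^2$, and recalling $u^n(0)=\varphi^n$, bounds the contribution of the two Caputo-derivative terms after integration by $\frac{t^{1-\alpha}}{\Gamma(2-\alpha)}\big(\|\nabla\varphi^n\|_{L_2(D)}^2 + \|\varphi_y^n\|_{L_2(D)}^2\big)$, which is uniformly bounded in $n$ by the convergence (\ref{n_initial_limit}) in $\dot W_2^1(D)$.

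It remains to control $\int_0^t\|g+fh^n\|_{L_2(D)}^2\,d\tau$ by the estimates already in hand. The term $\|fh^n\|_{L_2(D)}^2$ is bounded by (\ref{fh^n_integral}), while (\ref{g0})--(\ref{g0_L}) give $\|g\|_{L_2(D)}^2\le 2\|g(t,\cdot,\cdot,0)\|_{L_2(D)}^2 + 2\ell_0^2\|u^n\|_{L_2(D)}^2$. Integrating in time, the $\|u^n\|^2$ contributions are controlled by (\ref{un_estimate}) (up to the factor $T$), the $\|u_y^n\|^2$ contribution by (\ref{un_nablaestimate}), and the data integrals $\int_0^T H_0^2(\tau)\,d\tau$ and $\int_0^T\|g(\tau,\cdot,\cdot,0)\|_{L_2(D)}^2\,d\tau$ are finite by the hypotheses on $H$ and the continuity of $g$. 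Collecting all terms produces a constant $K_3$ independent of $n$, which is the assertion (\ref{Destimate}).
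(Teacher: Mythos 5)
Your proof is correct, and its skeleton coincides with the paper's: you test (\ref{equation2}) with $D_t^\alpha c_k^n$, apply Alikhanov's Lemma \ref{Alixan1} to the gradient terms, absorb $\|D_t^\alpha u^n\|^2_{L_2(D)}$ by Young's inequality, and control $\|fh^n\|^2_{L_2(D)}$ through (\ref{fh^n_integral}). The genuine difference is how the Caputo-derivative terms $D_t^\alpha\|\nabla u^n\|^2_{L_2(D)}$ and $D_t^\alpha\|u_y^n\|^2_{L_2(D)}$ are eliminated after integration in time. The paper applies the fractional integral $J_t^\alpha$ to the whole inequality, invokes Lemma \ref{J} (i.e.\ $J_t^\alpha D_t^\alpha v = v - v(0)$), and only at the end converts $J_t^\alpha\|D_t^\alpha u^n\|^2_{L_2(D)}$ into an ordinary time integral via the left inequality in (\ref{integral}). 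You instead integrate in the ordinary sense from the start and dispose of the Caputo terms with the identity
\[
\int_0^t D_\tau^\alpha\psi(\tau)\,d\tau \;=\; J_t^{1-\alpha}\psi(t) - \frac{t^{1-\alpha}}{\Gamma(2-\alpha)}\,\psi(0) \;\geq\; -\frac{t^{1-\alpha}}{\Gamma(2-\alpha)}\,\psi(0),
\]
obtained by Fubini and one integration by parts; this is a legitimate elementary substitute for the pair (Lemma \ref{J}, (\ref{integral})). Your final bookkeeping is in fact slightly cleaner: since everything in your inequality is an ordinary time integral, the contribution of $\|\omega'\|^2\|u_y^n\|^2_{L_2(D)}$ from (\ref{fh^n_integral}) is controlled exactly by (\ref{un_nablaestimate}), whereas the paper's last display still contains $J_t^\alpha\|u_y^n\|^2_{L_2(D)}$, for which the estimates (\ref{un_final}) and (\ref{u_y}) cited there are not literally sufficient — one really needs (\ref{estimateLi}) (equivalently (\ref{Du^n_final})), because (\ref{integral}) bounds $J_t^\alpha$ from below, not above, by $\int_0^t$. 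Both routes yield the same $n$-independent constant $K_3$; yours trades a quotable lemma for a short computation and avoids the uniform bound on the fractional integral of $\|u_y^n\|^2_{L_2(D)}$ altogether.
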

\begin{proof}
In equation (\ref{equation2}) instead of $w(x)$ we take $D_t^\alpha c_k^n(t, x)$ and sum up by $k$ from $1$ to $n$. Then
\[
\| D_t^\alpha u^n \|_{L_2(D)}^2 + \int_D \nabla u^n D_t^\alpha \nabla u^n \, dx\, dy +  \int_D u_y^n D_t^\alpha u_y^n \, dx\, dy = \int_D (g + f h^n)\, D_t^\alpha\, u^n\, dx\, dy.
\]
Let us apply A. Alikhanov's estimate (see Lemma \ref{Alixan1}) in the second and third integrals on the left-hand side of the equality. Then
\begin{equation}\label{D_t}
\| D_t^{\alpha} u^n \|^2_{L_2(D)} + \frac{1}{2} D_t^{\alpha} \| \nabla u^n \|^2_{L_2(D)} +  \frac{1}{2} D_t^{\alpha} \| u_y^n \|^2_{L_2(D)} \leq \left| \int_D (g + f h^n)\, D_t^\alpha\, u^n\, dx\, dy\right |. 
\end{equation}
Now we apply the operator $J_t^\alpha$ to both parts of inequality (\ref{D_t}) and use Lemma \ref{J}. Then
\[
J_t^\alpha\| D_t^{\alpha} u^n \|^2_{L_2(D)}+ \frac{1}{2} \| \nabla u^n \|^2_{L_2(D)} +  \frac{1}{2} \| u_y^n \|^2_{L_2(D)}
\]
\[
\leq \frac{1}{2} \| \nabla \varphi^n \|^2_{L_2(D)} + \frac{1}{2} \| \varphi_y^n \|^2_{L_2(D)}+J_t^\alpha\, \left| \int_D (g + f h^n)\, D_t^\alpha\, u^n\, dx\, dy\right |.
\]
First, we omit the second and third terms on the left-hand side of this inequality. Then, 
\begin{equation}\label{JD}
J_t^\alpha\| D_t^{\alpha} u^n \|^2_{L_2(D)}\leq \frac{1}{2} \| \nabla \varphi^n \|^2_{L_2(D)} + \frac{1}{2} \| \varphi_y^n \|^2_{L_2(D)}+J_t^\alpha\, \left| \int_D (g + f h^n)\, D_t^\alpha\, u^n\, dx\, dy\right |.
\end{equation}
For the last integral on the right-hand side, we have (see the properties  (\ref{g0}) and (\ref{g0_L}))
\[
\left|\int_D g(t,x,y, u^n)\, D_t^\alpha\, u^n\, dx\, dy\right|  \leq \left|\int_D g(t,x,y, 0)\, D_t^\alpha\, u^n\, dx\, dy\right|+ \ell_0\int_D |u^n\, D_t^\alpha\, u^n|\, dx\, dy.
\]
Let us estimate the integrals on the right-hand side using the Cauchy–Schwarz inequality $2\, ab \leq \varepsilon a^2 + \frac{1}{\varepsilon} b^2$, where we take $\varepsilon=\frac{1}{2}$ in the first integral and $\varepsilon=\frac{1}{2\, \ell_0}$ in the second integral. As a result, we have
\[
\left|\int_D g(t,x,y, u^n)\, D_t^\alpha\, u^n\, dx\, dy\right|  \leq  ||g(t, \cdot, \cdot, 0)||_{L_2(D)}^2 + \frac{1}{2} ||D_t^\alpha\,u^n||_{L_2(D)}^2 + \ell^2_0 \,  ||u^n||_{L_2(D)}^2.
\]
On the other hand, in the same way, we get
\begin{equation}\label{fDu^n}
    \left|\int_D f h^n\, D_t^\alpha\,u^n\, dx\, dy\right |\leq  ||f h^n||_{L_2(D)}^2 + \frac{1}{4} ||D_t^\alpha\,u^n||_{L_2(D)}^2.
\end{equation}
For $||f h^n||_{L_2(D)}^2$ we have (see (\ref{fh^n_integral}))
\[
 ||f h^n||_{L_2(D)}^2\leq f_M^2 \left(H_0^2(t)+ 2 ||\omega||^2 (  ||g(t,\cdot, \cdot, 0)||_{L_2(D)}^2 + \ell^2_0 ||u^n ||_{L_2(D)}^2) + ||\omega'||^2 ||u_y^n|||_{L_2(D)}^2\right).
 \]
Now returning to the estimate (\ref{JD}), we will have
\[\frac{1}{4}J_t^\alpha\| D_t^{\alpha} u^n \|^2_{L_2(D)}\leq \frac{1}{2} \| \nabla \varphi^n \|^2_{L_2(D)} + \frac{1}{2} \| \varphi_y^n \|^2_{L_2(D)}+J_t^\alpha\,  \left(||g(t, \cdot, \cdot, 0)||_{L_2(D)}^2 +  \ell^2_0 \,  ||u^n||_{L_2(D)}^2 \right )
\]
\[
+ f_M^2\, J_t^\alpha  \left(H_0^2(t)+ 2 ||\omega||^2 (  ||g(t,\cdot, \cdot, 0)||_{L_2(D)}^2 + \ell^2_0 ||u^n ||_{L_2(D)}^2) + ||\omega'||^2 ||u_y^n|||_{L_2(D)}^2\right).
\]
Apply the estimate   (\ref{integral}), to obtain 
\[
\int_0^t\| D_\tau^{\alpha} u^n (\tau, \cdot, \cdot)\|^2_{L_2(D)}d \tau\leq  T^{1-\alpha}\, \Gamma(\alpha) \, J_t^\alpha\| D_t^{\alpha} u^n \|^2_{L_2(D)}.
\]
Now, from estimates  of $u^n$, $u_y^n$ in (\ref{un_final}) and (\ref{u_y}) the assertion of the lemma follows.
\end{proof}

In conclusion of this section, we evaluate $h^n$. 
\begin{lem}\label{hestimatelemma} There is a positive constant $K_4$ such that for all $n$
\begin{equation}\label{hestimate}
 \sup_{t\in(0, T)} ||h^n(t, x)||_{L_2(G)}^2
    \leq K_4.  
\end{equation}
\end{lem}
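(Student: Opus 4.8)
The plan is to estimate $h^n$ directly from its representation \eqref{h_n}, written as $h^n = N_n/(f(t,x,\cdot),\omega)$ with numerator $N_n(t,x) = D_t^{\alpha} H - \Delta H - (g(t,x,\cdot,u^n),\omega) + (u^n_y,\omega')$. Taking the $L_2(G)$ norm, I must bound the denominator below and the numerator above, uniformly in $n$ and $t$. The terms $D_t^{\alpha}H$ and $\Delta H$ are controlled by the assumed regularity $D_t^\alpha H,\Delta H\in C([0,T]\times\overline G)$; the $g$-term is handled through \eqref{gestimate} together with the pointwise bound $\sup_t\|u^n(t,\cdot,\cdot)\|^2_{L_2(D)}\le K_1$ of Lemma~\ref{un_estimateLemam}; the remaining term $(u^n_y,\omega')$ forces control of $\|u^n_y(t,\cdot,\cdot)\|_{L_2(D)}$ at each fixed $t$. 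Since Lemma~\ref{nablaestimate} provides only the time-integrated bound \eqref{un_nablaestimate}, the crux of the argument is to upgrade it to a pointwise-in-time estimate.

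To obtain $\sup_t\|u^n_y(t)\|_{L_2(D)}$ I would return to inequality \eqref{D_t}. Setting $v(t) := \|\nabla u^n(t)\|^2_{L_2(D)} + \|u^n_y(t)\|^2_{L_2(D)}$ (which is absolutely continuous and nonnegative by the regularity of the Galerkin coefficients $c_k^n$), I estimate the right-hand side of \eqref{D_t} by Young's inequality, $|\int_D (g+fh^n)D_t^\alpha u^n\,dx\,dy| \le \tfrac12\|g+fh^n\|^2_{L_2(D)} + \tfrac12\|D_t^\alpha u^n\|^2_{L_2(D)}$, absorb the last summand into the term $\|D_t^\alpha u^n\|^2_{L_2(D)}$ already present on the left, and drop the nonnegative remainder to arrive at $D_t^\alpha v(t) \le 2\|g(u^n)\|^2_{L_2(D)} + 2\|fh^n\|^2_{L_2(D)}$. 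Applying the decomposition \eqref{g0}--\eqref{g0_L} to the first summand and the bound \eqref{fh^n_integral} to the second, and using $\|u^n_y\|^2_{L_2(D)} \le v(t)$ together with $\|u^n\|^2_{L_2(D)}\le K_1$, this takes the generalized Gronwall form $D_t^\alpha v(t) \le c_1 v(t) + c_2(t)$, where $c_1 = 2 f_M^2\|\omega'\|^2$ and $c_2(t)$ is a continuous, hence bounded, function of the data. Lemma~\ref{Alixan2L}, combined with the boundedness \eqref{E} of the Mittag--Leffler functions, the inequality \eqref{integral} applied to $J_t^\alpha c_2$, and the convergence \eqref{n_initial_limit} which bounds $v(0)=\|\nabla\varphi^n\|^2_{L_2(D)}+\|\varphi^n_y\|^2_{L_2(D)}$, then yields a constant $C_5$, independent of $n$, with $\sup_{t\in[0,T]} v(t) \le C_5$; in particular $\sup_t\|u^n_y(t)\|^2_{L_2(D)}\le C_5$.

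For the denominator I would use that the map $(t,x)\mapsto (f(t,x,\cdot),\omega)$ is continuous on the compact set $[0,T]\times\overline G$ and, by the standing hypothesis $(f(t,x,\cdot),\omega)\neq 0$, nowhere zero; hence $\delta_0 := \min_{[0,T]\times\overline G}|(f(t,x,\cdot),\omega)| > 0$. Consequently $\|h^n(t,\cdot)\|^2_{L_2(G)} \le \delta_0^{-2}\int_G |N_n(t,x)|^2\,dx$. Expanding the numerator via $(a+b+c+d)^2\le 4(a^2+b^2+c^2+d^2)$, integrating over $G$, and inserting the bounds of the previous paragraphs — $H_0^2(t)$ bounded, the $g$-term controlled by \eqref{gestimate} with $\sup_t\|u^n\|^2_{L_2(D)}\le K_1$, and $\int_G|(u^n_y,\omega')|^2\,dx \le \|\omega'\|^2\,\|u^n_y(t)\|^2_{L_2(D)}\le \|\omega'\|^2 C_5$ by the Cauchy--Schwarz inequality in $y$ — delivers the desired uniform bound $K_4$.

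The only genuinely new work is the pointwise-in-time gradient estimate $\sup_t v(t)\le C_5$; once it is in hand, the bound on $h^n$ is routine. The delicate points are the absorption of $\|D_t^\alpha u^n\|^2_{L_2(D)}$ in \eqref{D_t} and the verification that the forcing term $c_2(t)$ is bounded uniformly in $n$, both of which rest on the a priori estimate $\|u^n\|^2_{L_2(D)}\le K_1$ and on the assumed regularity of $H$.
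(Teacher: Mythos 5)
Your proof is correct, and its final assembly --- estimating $\|h^n(t,\cdot)\|_{L_2(G)}$ directly from the representation \eqref{h_n}, handling the $H$-terms by their assumed continuity, the $g$-term by \eqref{gestimate} together with Lemma \ref{un_estimateLemam}, and $(u^n_y,\omega')$ by the Cauchy--Schwarz inequality --- is exactly the paper's. The substantive difference is how the term $\|u_y^n(t,\cdot,\cdot)\|^2_{L_2(D)}$ is controlled, and here your version is more complete than the published one. The paper bounds $\|h^n(t,\cdot)\|^2_{L_2(G)}\le A_1+A_2\|u^n\|^2_{L_2(D)}+A_3\|u_y^n\|^2_{L_2(D)}$ and then cites the estimates \eqref{un_final} and \eqref{u_y}; but \eqref{u_y} controls only $\int_0^t\|u_y^n(s,\cdot,\cdot)\|^2_{L_2(D)}\,ds$, which by itself yields an $L_2$-in-time bound on $h^n$, not the stated $\sup_t$ bound. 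You identified precisely this deficiency and supplied the missing ingredient: a pointwise-in-time estimate $\sup_{t\in[0,T]}\bigl(\|\nabla u^n(t)\|^2_{L_2(D)}+\|u_y^n(t)\|^2_{L_2(D)}\bigr)\le C_5$ uniform in $n$, obtained from inequality \eqref{D_t} by Young's inequality, absorption of $\frac12\|D_t^\alpha u^n\|^2_{L_2(D)}$ into the left-hand side, the bounds \eqref{g0_L}, \eqref{fh^n_integral}, \eqref{un_estimate}, and the fractional Gronwall Lemma \ref{Alixan2L} with $c_1=2f_M^2\|\omega'\|^2$, the initial value being uniformly bounded thanks to \eqref{n_initial_limit}. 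This step is sound: it reuses \eqref{D_t} exactly as derived in the proof of Lemma \ref{DunMain}, only without applying $J_t^\alpha$, and it is what makes the $\sup_t$ claim legitimate. Your treatment of the denominator via $\delta_0=\min_{[0,T]\times\overline{G}}|(f(t,x,\cdot),\omega)|>0$ (continuity plus compactness) is also a cleaner justification than the paper's implicit reliance on \eqref{fomega} for terms in which $f$ itself does not appear in the numerator. In short: same final structure as the paper, but your intermediate pointwise gradient estimate fills a genuine gap in the published argument.
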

\begin{proof}
By the definition of $h^n$ (see (\ref{h_n})), we have
\[
||h^n(t, x)||_{L_2(G)}^2
    \leq  3\left|\left|\frac{D_t^{\alpha} H(t, x) -\Delta H(t, x)}{(f(t, x, \cdot), \omega)}\right|\right|_{L_2(G)}^2
\]
\begin{equation}\label{hestimate2}
    + 3\left|\left|\frac{(g(t, x, \cdot, u^n(t,x, \cdot)), \omega)}{(f(t, x, \cdot), \omega)}\right|\right|_{L_2(G)}^2 + 3\left|\left|\frac{(u^n_y(t,x, \cdot)\, , \, \omega')}{(f(t, x, \cdot), \omega)}\right|\right|_{L_2(G)}^2.
\end{equation}
The second term on the right is estimated using the relation (\ref{gestimate}) and to estimate the third term, we apply the Cauchy-Schwartz inequality. As a result, we obtain
\[
||h^n(t, x)||_{L_2(G)}^2
    \leq A_1 + A_2 ||u^n||_{L_2(D)}^2 + A_3 ||u_y^n||_{L_2(D)}^2,
    \]
where $A_j$ does not on $n$. Now, from estimates (\ref{un_final}) and (\ref{u_y}), the assertion of the lemma follows. \end{proof}

\section{Convergence and Proof of Theorem \ref{theorem 1}}

From the estimates (\ref{un_estimate}),(\ref{un_nablaestimate}), and (\ref{Destimate})  it follows that the sequences $u^n(\xi, x, y)$, $u_y^n(\xi, x, y)$, $\nabla u^n(\xi, x, y)$ and $D^\alpha_\xi u^n(\xi, x, y)$ are bounded in the norm of the space $L_2(Q_t)$, for any $t\in (0, T]$. Therefore, from the sequence $u^n(\xi, x, y)$ one can choose a subsequence $u^\mu(\xi, x, y)$ such that the following weak convergences in $L_2(Q_t)$ hold (note that in our case, for example, the weak limit of the Caputo derivatives is the Caputo derivative of the weak limit):
\begin{equation}\label{converUx}
\begin{cases}
u^\mu (\xi, x, y)\rightharpoonup  u (\xi, x, y),\\
u_y^\mu (\xi, x, y)\rightharpoonup  u_y (\xi, x, y),\\
\nabla\,u^\mu (\xi, x, y)\rightharpoonup  \nabla\, u (\xi, x, y), \\
D_\xi^\alpha u^\mu (\xi, x, y) \rightharpoonup D_\xi^\alpha u (\xi, x, y).
\end{cases}
\end{equation}

 Now we apply Lemma \ref{li} with $M= \dot{W}_2^1 (D)$ and $B=Y= L_2(D)$ to obtain strong convergence. To do this, we note that the embedding $\dot{W}_2^1 (D) \hookrightarrow \hookrightarrow  L_2(D)$  is compact and the Riemann-Liouville integrals satisfy the estimate (\ref{estimateLi}) (see also (\ref{Destimate})). By this lemma, we have
 \begin{equation}\label{strong}
u^\mu \to u \,\,\text{strongly\,\, in}  \,\, L_2(Q_T), \,\, \text{as}\,\, \mu \to \infty,   
 \end{equation}
 and
 \begin{equation}\label{almost}
u^\mu \to u \,\,\text{a.e.\,\, in}  \,\, Q_T, \,\, \text{as}\,\, \mu \to \infty,   
 \end{equation}

Let us consider the components of function $h^\mu$ separately. If we take as $w(\tau, x, y) \in L_2(Q_t)$ a function $W(\tau, x) V(y)$ with $W\in L_2(0, t; L_2(G))$ and $V\in L_2(0, \pi)$, then by virtue of the estimates (\ref{fomega}) we have $f_\omega \, \omega' \, W \in L_2 (Q_t)$. Therefore, passing to the limit $\mu \to \infty$ in the next integral
\[
\int_0^\pi V(y) \int_0^t \int_G \int_0^\pi  u^\mu_\eta(\tau,x, \eta)\, f_\omega (\tau, x, y)\, \omega'(\eta) W(\tau, x) \, d\tau\, dx\, d\eta,
\]
we will have (see the second line in (\ref{converUx}))
\[
 f_\omega (t, x, y)\,(u_y^\mu, \omega')   \rightharpoonup   f_\omega (t, x, y)\,(u_y, \omega'), \,\, \text{weak\,\, in} \,\, L_2(Q_t)\,\, \text{as}\,\, \mu\to \infty.
\]
On the other hand, we have (see (\ref{g_Lipschitz}) and (\ref{g(u)}))
\[
    ||f_\omega (g(u^\mu) - g(u), \omega)||^2_{L_2(Q_t)} \leq f_M^2 \ell_0^2 ||\omega||^2 ||u^\mu- u||^2_{L_2(Q_t)} \to 0, \,\, \text{as}\,\, \mu\to \infty.
\]
From the last two relations, it follows (see  (\ref{g(u)}))
\begin{equation}\label{h_mu}
f_\omega (t, x, y)\,h(u^\mu)   \rightharpoonup   f_\omega (t, x, y)\,h(u), \,\, \text{weak\,\, in} \,\, L_2(Q_t)\,\, \text{as}\,\, \mu\to \infty.
\end{equation}

Again, due to the Lipschitz condition, with respect to the function $g(u^\mu)$ we have
\begin{equation}\label{g_mu}
    ||g(u^\mu) - g(u)||^2_{L_2(Q_t)} \leq  \ell_0^2  ||u^\mu- u||^2_{L_2(Q_t)} \to 0, \,\, \text{as}\,\, \mu\to \infty.
\end{equation}

Now let us multiply equation (\ref{equation_main}) by $w\in L_2(0, T)$, then integrate over $(0,t)$ and rewrite the resulting equation with respect to $u^\mu$. Then
\[
    \int_0^t\int_D D_\tau^\alpha u^\mu v w\, dx \, dy d\tau+ \int_0^t\int_D (\nabla u^\mu \nabla v  + u^\mu_y v_y) w\, dx \, dy d\tau 
    \]
\[
        = \int_0^t\int_D f h(u^\mu) v w \, dx \, dy  d\tau + \int_0^t\int_D g(u^\mu) v w\, dx \, dy d\tau ,
   \]
where $v\in \dot{W}_2^1(D)$. 
Taking into account (\ref{h_mu}) and (\ref{g_mu}) we pass here to the limit  $\mu \to \infty$. Then we will have
\begin{equation}\label{equation_mainT}
    \int_0^t \int_D D_t^\alpha u v w\, dx \, dy d\tau +  \int_0^t\int_D (\nabla u \nabla v + u_y v_y) \, w\, dx \, dy d\tau
    \end{equation}
    \[
    =  \int_0^t\int_D f h (u) v w\, dx \, dy d\tau +  \int_0^t\int_D g(u) v w\, dx \, dy d\tau.
\]

Let us note the obvious fact that if $s(t)$ is integrable in any subset $(0,t)$ of $[0,T]$ and $\int_0^t s(\tau) d\tau =0$, then $s(t)=0$ is almost everywhere on $[0,T]$. Since $w\in L_2(0, T)$ is an arbitrary function, then the last equality coincides with (\ref{equation_main}). In other words, the pair of functions $\{u(t,x,y), h(t,x)\}$ defined as a limit of (\ref{6}) and (\ref{h}) is a weak solution of the inverse problem, i.e., it satisfies all conditions of Definition \ref{opr1}. 

Finally, by the lower semicontinuity of the norms, we can combine the
estimates (\ref{un_estimate}),(\ref{un_nablaestimate}), (\ref{Destimate}),  and (\ref{hestimate}) with the convergence results stated at
(\ref{converUx}) to prove (\ref{uestimate1}) - (\ref{h_estimate}).

To prove Theorem \ref{theorem 1}, it remains to show the uniqueness of the solution of the inverse problem. 

Suppose that there are two weak solutions to the inverse problem: \( (u_1, h(u_1)) \) and \( (u_2, h(u_2)) \) (see (\ref{g(u)})).  
Let us first write equation (\ref{equation_main}) relative to $u_1$ and $u_2$ and take the difference. Then let us take $u: = u_1 - u_2$ as a test function and rewrite the resulting equation:
\begin{equation}\label{equation_main_uniq}
    \int_D D_t^\alpha u u \, dx \, dy + ||\nabla u||^2_{L_2(D)} + || u_y||^2_{L_2(D)} \, dx \, dy
    \end{equation}
    \[
    = \int_D f (h(u_1) -h(u_2)) u \, dx \, dy + \int_D ( g(u_1) - g(u_2)) u \, dx \, dy.
    \]
    For the second integral on the right-hand side, we have
\[
\left|\int_D ( g(u_1) - g(u_2)) u \, dx \, dy\right|\leq \ell_0 ||u||^2_{L_2(D)}.
\]
With respect to the first integral on the right-hand side, we write (see (\ref{fu^n}))
\[
\left|\int_D  f (h(u_1) -h(u_2)) u\, dx\, dy\right |\leq \frac{\varepsilon}{2} || f (h(u_1) -h(u_2))||_{L_2(D)}^2 + \frac{1}{2\varepsilon} ||u||_{L_2(D)}^2,
\]
where $\varepsilon = (3 f_M^2 ||\omega'||^2)^{-1}$. Repeating the same reasoning as in the proof of estimate (\ref{fu^nNew}), we will have
 \[\left|\int_D  f (h(u_1) -h(u_2)) u\, dx\, dy\right |\leq    \left[\frac{\ell_0^2 ||\omega||^2}{||\omega'||^2} + \frac{3 f_M^2 ||\omega'||^2}{2}\right] ||u||_{L_2(D)}^2 + \frac{1}{2} ||u_y||_{L_2(D)}^2.
    \]
    Now let us rewrite the equality (\ref{equation_main_uniq}) as
\[
 \int_D D_t^\alpha u u \, dx \, dy + ||\nabla u||^2_{L_2(D)} + \frac{1}{2} || u_y||^2_{L_2(D)} \, dx \, dy
\]
\[
\leq \left[\frac{\ell_0^2 ||\omega||^2}{||\omega'||^2} + \frac{3 f_M^2 ||\omega'||^2}{2} + \ell_0 \right] ||u||_{L_2(D)}^2.
\]
For the integral on the left-hand side, we apply Alikhanov's estimate (see Lemma \ref{Alixan1}), and discard the next two terms. Then
\[
D_t^\alpha \| u(t, \cdot, \cdot) \|_{L_2(D)}^2 \leq 2\,\left[\frac{\ell_0^2 ||\omega||^2}{||\omega'||^2} + \frac{3 f_M^2 ||\omega'||^2}{2} + \ell_0 \right] ||u (t, \cdot, \cdot)||_{L_2(D)}^2, \,\, t\in (0, T],
\]
and \(\| u (0, \cdot, \cdot)\|_{L_2(D)}^2 =0\). By virtue of Lemma \ref{Alixan2L} it follows that $||u (t, \cdot, \cdot)||_{L_2(D)}^2 \equiv 0$ for all $t\in (0,\, T]$. Consequently $u_1 (t, x, y) = u_2 (t, x, y)$ almost everywhere in $Q_T$. By virtue of the definition (\ref{h}) of the function $h$ we obviously have $h(t, x) =0$, for almost all $(t, x) \in (0, T)\times G$. Thus, the uniqueness of the solution of the inverse problem, and hence Theorem \ref{theorem 1} is completely proved.

\begin{rem}\label{zam.3} Note that the Theorem \ref{theorem 1} remains valid for diffusion equations as well. In this case, the usual equality $\frac{d}{dt} u^2= 2 u \frac{d}{dt} u$ should be used instead of Alikhanov's estimate.
\end{rem}

\section{Conclusions}
 
In this paper, we study a new inverse problem for nonlinear subdiffusion equations, namely, the problem of determining a source function that depends on both time and a subset of spatial variables. The nonlinearity of the equation is that the equation involves a function of the solution of the equation: $g(t,x,y, u)$. To our knowledge, such a problem has not been studied before, even for the classical diffusion equation. To solve this problem, we apply the Galerkin method and the method of a priori estimates. To prove the convergence of the corresponding series, we apply an analogue of the Aubin–Lions compactness lemma in the case of fractional derivatives. The existence and uniqueness of a weak solution to the inverse problem are established, and coercive estimates are proven.
The elliptic part of the equation is given by $\Delta_x u + u_{yy}$, $x\in \mathbb{R}^n$, and the unknown function depends on a subset of spatial variables, denoted by $h(t, x)$. The proposed method can be extended without significant changes to the case when $y \in \mathbb{R}^m$ (see \cite{AshurovMuhiddinovanew1}).

The choice of constructing a weak solution is motivated by the relative simplicity of obtaining a priori estimates in this structure. By increasing the smoothness of the solution, one can prove the existence of a strong solution to the inverse problem (see \cite{Hompush}). However, this will be the focus of a future article.

\

\begin{center}
ACKNOWLEDGEMENTS    
\end{center}

The authors are grateful to Sh. A. Alimov, and Z. Sabirov for discussions of
these results. The authors also acknowledge financial support from the Ministry of Innovative Development of the Republic of Uzbekistan, Grant No F-FA-2021-424.


\end{document}